\newcommand{\p}{\partial}
\newcommand{\vp}{\varphi}
\newcommand{\vt}{\vartheta}
\newcommand{\thee}{\theta^-_1}
\newcommand{\nnu}{\Theta}
\newcommand{\vv}{{\bf v}}
\newcommand{\vN}{{{}^N\!v}}
\newcommand{\Vv}{{\bf V}}
\newcommand{\RRR}{{\bf R}}
\newcommand{\va}{{v'_P}^N}
\newcommand{\vb}{{v'_P}^\nu}
\newcommand{\vc}{w_P^\nu}
\newcommand{\vcc}{{\widetilde w}^{\nu N}}
\newcommand{\bb}{\mbox{\boldmath$\beta$}}
\newcommand{\R}{{\mathbb R}}
\newcommand{\IP}{{\bf P}}
\newcommand{\Z}{{\mathbb Z}}
\newcommand{\E}{{\bf E}}
\newcommand{\T}{{\mathbb T}}
\newcommand{\N}{{\mathbb N}}
\newcommand{\PP}{{\bf P}}
\newcommand{\hD}{\widehat\Delta}
\newcommand{\aA}{{\cal A}}
\newcommand{\DD}{{\cal D}}
\newcommand{\FF}{{\cal F}}
\newcommand{\cH}{{\cal H}}
\newcommand{\LL}{{\cal L}}
\newcommand{\MM}{{\cal M}}
\newcommand{\RR}{{\cal R}}
\newcommand{\KK}{{\cal K}}
\newcommand{\TT}{{\cal T}}
\newcommand{\Sig}{{\cal S}}
\newcommand{\strela}{\rightharpoonup}
\newcommand{\Op}{\mathop{\rm Op}\nolimits}
\def\12{\tfrac12}
\def\lan{\langle}
\def\ran{\rangle}
\theoremstyle{plain}
\newtheorem{theorem}{Theorem}[section]
\newtheorem{lemma}[theorem]{Lemma}
\newtheorem{proposition}[theorem]{Proposition}
\newtheorem{corollary}[theorem]{Corollary}
\theoremstyle{definition}
\newtheorem{definition}[theorem]{Definition}
\theoremstyle{remark}
\numberwithin{equation}{section}
\begin{document}
\author{Sergei B. Kuksin}
\title{Damped-driven KdV and effective equation for long-time behaviour of its
solutions.}
\date{}
\maketitle

\begin{abstract}
For the damped-driven KdV equation
$$
\dot u-\nu{u_{xx}}+u_{xxx}-6uu_x=\sqrt\nu\,\eta(t,x),\; x\in S^1,\
\int u\,dx\equiv \int\eta\,dx\equiv0\,,
$$
with $0<\nu\le1$ and smooth in $x$
white in $t$ random force $\eta$, we study the limiting long-time
 behaviour
 of the KdV integrals of motions $(I_1,I_2,\dots)$,
 evaluated along a solution $u^\nu(t,x)$, as $\nu\to0$. We prove that
 for $0\le\tau:= \nu t \lesssim1$ the vector
$
I^\nu( \tau)=(I_1(u^\nu( \tau,\cdot)),I_2(u^\nu( \tau,\cdot)),\dots),
$
 converges in distribution to a limiting
process $I^0(\tau)=(I^0_1,I^0_2,\dots)$. The $j$-th component $I_j^0$ equals
$\12(v_j(\tau)^2+v_{-j}(\tau)^2)$, where
$v(\tau)=(v_1(\tau), v_{-1}(\tau),v_2(\tau),\dots)$ is the vector of Fourier coefficients
of a solution of an
{\it effective equation} for the dam\-ped-driven KdV. This new equation
 is a quasilinear stochastic
heat equation with a non-local nonlinearity,
written in the Fourier coefficients. It is well posed.
\end{abstract}

\tableofcontents

\section{Introduction}\label{s_intr}
In this work we continue the study of  randomly perturbed and damped KdV
equation, commenced in \cite{KP08}. Namely, we consider the  equation
\begin{equation}\label{kdv}
u_t-\nu u_{xx}+u_{xxx}- 6 uu_x=\sqrt{\nu} \,\eta(t,x),
\end{equation}
where
$x\in S^1\ \mathop{=}\limits^{\rm def}\ \mathbb R/2\pi\mathbb Z$, $\int_{S^1}u\,dx=0$,
 and $\nu>0$ is a small positive parameter.
The random stationary force $\eta=\eta(t,x)$ is
$\
\eta=\frac{d}{d t}\left(\sum_{s\in\mathbb Z_0}b_s\beta_s(t) e_s(x)\right)
$.
Here $\mathbb Z_0=\mathbb Z\setminus\{0\}$, $\beta_s$ are standard independent
 Wiener processes defined on a probability space $(\Omega,{\cal F},{\bf P})$, and
$\{e_s,\ s\in\mathbb Z_0\}$ is the usual trigonometric basis
\begin{equation*}
e_s(x)=\begin{cases}\displaystyle \frac{1}{\sqrt{\pi}}\,\cos(sx),&\quad s>0,
 \\[2mm]
\displaystyle \frac{1}{\sqrt{\pi}}\,\sin(sx),&\quad s<0.
\end{cases}
\end{equation*}
The coefficients $\nu$ and $\sqrt{\nu}$ in (\ref{kdv}) are balanced in such a way
 that solutions of the equation stays of order one as $t\to\infty$ and $\nu\to0$,
see \cite{KP08}. The coefficients $b_s$  are non-zero and are even in $s$,
 i.e. $b_s=b_{-s}\ne0$ $\forall\,s\ge1$. When  $|s|\to\infty$ they  decay faster
 than any negative power of $|s|$: for any $m\in \mathbb Z^+$ there is $C_m>0$
 such that
\begin{equation*}
|b_s|\le C_m|s|^{-m} \qquad \hbox{for all }s\in\mathbb Z_0.
\end{equation*}
This implies that the force $\eta(t,x)$ is smooth in $x$ for any $t$.
We study  behaviour of solutions for (\ref{kdv}) with given smooth initial
data
\begin{equation}\label{k0}
u(0,x)=u_0(x)\in C^\infty(S^1)
\end{equation}
for
\begin{equation}\label{k1}
    0\le t\le \nu^{-1}  T,\qquad 0<\nu\ll1.
\end{equation}
 Here $T$ is any fixed positive constant.

The KdV equation \eqref{kdv}${}_{\nu=0}$ is integrable. That is to say, the function
space $\{u(x): \int u\,dx=0\}$ admits analytic symplectic coordinates
$v=(\vv_1,\vv_2,\dots)=\Psi(u(\cdot))$,
 where $\vv_j=(v_j,v_{-j})^t\in\R^2$,
 such that the quantities $I_j=\12|\vv_j|^2$, $j\ge1$, are actions
 (integrals of motion),
while $\vp_j=\,$Arg$\,\vv_j$, $j\ge1$, are  angles. In the $(I,\vp)$-variables KdV
takes the integrable  form
\begin{equation}\label{k2}
    \dot I=0,\qquad \dot\vp=W(I),
\end{equation}
where $W(I)\in\R^\infty$ is the {\it frequency vector}, see Section~1.2. 
\footnote{The actions $I$ and the angles $q$ were constructed first (before the
Cartesian coordinates $v$), starting with the pioneer works by Novikov and
Lax in 1970's. See in \cite{McKT76, ZMNP, K2, KaP}.
}
 The
integrating map $\Psi$ is called the {\it nonlinear Fourier transform}.
\footnote{ The reason is that
an analogy of $\Psi$, a map which integrates the linearised KdV equation
$\ \dot u+u_{xxx}=0$, is the usual Fourier transform.}

We are mostly concerned with behaviour of actions $I(u(t))\in\R^\infty$ of solutions
for the perturbed KdV equation
 \eqref{kdv} for $t$, satisfying \eqref{k1}. For this end let us  write equations
for $I(v)$ and $\vp(v)$, using the slow time $\tau=\nu t\in[0,T]$:
\begin{equation}\label{k3}
    dI(\tau)=F(I,\vp)\,d\tau +\sigma(I,\vp)\,d\beta(\tau),\qquad
    d\vp =\nu^{-1}W(I)\,d\tau+\dots,
\end{equation}
where the dots stand for terms of order one, $\beta=(\beta_1,\beta_2,\dots)^t$ and
$\sigma(I,\vp)$ is an infinite matrix. For finite-dimensional
stochastic systems of the form \eqref{k3} under certain non-degeneracy assumptions,
for the $I$-component of solutions for \eqref{k3}  the averaging principle holds. That
is, when $\nu\to0$ the $I$-component of a solution
 converges in distribution to a solution of the averaged equation
\begin{equation}\label{k33}
dI=\lan F\ran(I)\,d\tau + \lan\sigma\ran(I)\,d\beta(\tau).
\end{equation}
Here $\lan F\ran$ is  the averaged drift,   $\lan F\ran=\int F(I,\vp)\,d\vp$,
 and the dispersion
matrix $\lan\sigma\ran$ is a square root of the averaged diffusion
$\int \sigma(I,\vp)\sigma^t(I,\vp)\,d\vp$. This result was claimed in \cite{Khas68} and
was first proved in \cite{FW03}; see \cite{Kif04} for recent development.
 In \cite{KP08} we established ``half" of this result
for solutions of eq.~\eqref{k33} which corresponds to  \eqref{kdv}.
 Namely, we have shown that for solutions $u_\nu(\tau,x)$ of \eqref{kdv},
\eqref{k0}, where $t=\nu^{-1}\tau$ and $0<\tau\le T$,

\noindent{\bf
i)}\, the set of laws of actions $\{\DD I(u_\nu(\tau))\}$ is tight in the space of continuous
 trajectories
$I(\tau)\in h^p_I$, $0\le\tau\le T$, where the space $h^p_I$ is given the norm
$\
|I|_{h^p_I}=2\sum_{j=1}^\infty j^{1+2p}|I_j|
$
and  $p$ is any number $\ge3$;
\smallskip

\noindent{\bf
ii)}\,  any limiting measure $\lim_{\nu_j\to0}\DD I(u_{\nu_j}(\cdot))$ is a law of a weak
solution $I^0(\tau)$ of eq.~\eqref{k33} with the initial condition
\begin{equation}\label{k4}
    I(0)=I_0:=I(u_0).
\end{equation}
The  solutions $I^0(\tau)$ are {\it regular} in the sense that all
 moments of the random variables $\sup_{0\le\tau\le T}|I^0(\tau)|_{h_I^r}$, $r\ge0$,
are finite.
\smallskip

Similar results are obtained in \cite{KP08} for limits (as $\nu_j\to 0$)  of stationary
 in time solutions for eq.~\eqref{kdv}.
\smallskip

If we knew that \eqref{k33}, \eqref{k4} has a unique solution $I^0(\tau)$,
 then ii) would imply that
\begin{equation}\label{k5}
\DD I(u_\nu(\cdot))\strela \DD I^0(\cdot)\quad{\rm as}\quad \nu\to0,
\end{equation}
as in the finite-dimensional case. But the uniqueness is far from obvious since
 \eqref{k33} is
a bad equation in the bad phase-space $\R^\infty_+$: the dispersion $\lan \sigma\ran$
is not Lipschitz in $I$, and the drift
$\lan F\ran(I)$ is an unbounded operator. In this paper we show that still the
convergence \eqref{k5} holds true:
\smallskip

{\bf Theorem A.} The problem \eqref{k33}, \eqref{k4} has a solution $I^0(\tau)$ such that
the convergence \eqref{k5} holds.
\smallskip

The proof of this result, given in   Section~4, Theorem~\ref{t_final}, relies on
a new construction, crucial for  this work. Namely, it turns
out that the `bad' equation \eqref{k33} may be lifted to a `good'
 {\it effective equation} on the
variable
 $v=(\vv_1,\vv_2,\dots)$, $\vv_j\in\R^2$,
which transforms to \eqref{k33} under the mapping
$$
\pi_I:v\mapsto I,\qquad I_j=\12|\vv_j|^2.
$$
 To derive the effective equation  we evoke the mapping
 $\Psi$ to  transform
eq.~\eqref{kdv}, written in the slow time $\tau$, to a system of  stochastic
equations on the vector $v(\tau)$
\begin{equation}\label{k6}
d\vv_k(\tau)=\nu^{-1}d\Psi_k(v)V(u)\,d\tau +P_k(v)\,d\tau
+\sum_{j\ge1} B_{kj}(v)\,d\bb_j(\tau),\quad k\ge1.
\end{equation}
Here $V(u)=-u_{xxx}+6uu_x$ is the vector-field of KdV, $P_k\,d\tau +\sum B_{kj}\,d\bb_j$ is the
perturbation $u_{xx}+\eta(\tau,x)$,  written in the $v$-variables,
 and $\bb_j$'s are standard Wiener processes in $\R^2$ (so $B_{kj}$'s are $2\times2$-blocks). We will refer to the system \eqref{k6} as to the {\it $v$-equations}.

 The system  \eqref{k6} is singular when $\nu\to0$.
The  effective equations for  \eqref{k6} is a system of
 regular stochastic equations
\begin{equation}\label{k7}
d\vv_k(\tau) = \lan P\ran_k\,d\tau + \lan\lan B\ran\ran_{kj}(v)\,d\beta_j(\tau),
\quad k\ge1.
\end{equation}
To define the {\it effective drift} $\lan P\ran$ and the {\it effective dispersion}
$\lan\lan B\ran\ran$, for any $\theta\in\T^\infty$ let us denote by $\Phi_\theta$ the
linear operator in the  space of sequences $v=(\vv_1,\vv_2,\dots)$ which
rotates each two-vector
$\vv_j$ by the angle $\theta_j$. The rotations $\Phi_\theta$ act on vector-fields
on the $v$-space, and $\lan P\ran$ is the result of the action of  $\Phi_\theta$ on $P$,
averaged in $\theta$:
\begin{equation}\label{eff}
\lan P\ran(v)=\int_{\T^\infty}\Phi_{-\theta}P(\Phi_\theta v)\,d\theta
\end{equation}
($d\theta$ is the Haar measure on $\T^\infty$).


Consider the diffusion operator $BB^t(v)$ for the $v$-equations \eqref{k6}. It defines a (1,1)-tensor on the linear space of vectors $v$. The averaging of this tensor
with respect to the transformations $\Phi_\theta$ is a tensor,  corresponding
 to the  operator
\begin{equation}\label{AvDiff}
\lan BB^t\ran(v)=\int_{\T^\infty}\Phi_{-\theta}\cdot \big((BB^t)(\Phi_\theta v)\big)\cdot \Phi_\theta \,d\theta.
\end{equation}
This is the {\it averaged diffusion operator}.
The effective dispersion operator  $ \lan\lan B\ran\ran(v)$ is its non-symmet\-ric square root:
\begin{equation}\label{effD}
\lan\lan B\ran\ran(v)\cdot\lan\lan B\ran\ran^t(v)=\lan BB^t\ran(v).
\end{equation}
Such a square  root is non-unique. The one, chosen in this work, is
given by an explicit construction and is analytic in $v$ (while the {\it symmetric}
square root of $\lan BB^t\ran(v)$ is only a H\"older-$\12$ continuous
function of $v$). See Sections~1.5 and 2.

Let us provide the space of vectors $v$ with the norms $|\cdot|_r$, $r\ge0$, where
$|v|^2_r=\sum_j|\vv_j|^2j^{1+2r}$. A solution of eq.~\eqref{k7} is called {\it regular} if
all moments of all random variables $\sup_{0\le\tau\le T}|v(\tau)|_r$, $r\ge0$, are finite.
\smallskip

\noindent{\bf Theorem B.}
System \eqref{k7} has at most one regular strong solution $v(\tau)$ such that $v(0)=\Psi(u_0)$.
\smallskip

This result is proved in Section~4, where we show that system \eqref{k7} is a quasilinear
stochastic heat equation, written in Fourier coefficients.

The effective system \eqref{k7} is useful to study  eq.~\eqref{kdv} since this is
 a lifting of the averaged
equations \eqref{k33}. The corresponding result, stated below, is proved in Section~3:
\smallskip

\noindent{\bf Theorem C.}
For every weak solution $I^0(\tau)$ of \eqref{k33} as in assertion ii) there exists a regular
weak solution $v(\tau)$ of \eqref{k7} such that $v(0)=\Psi(u_0)$ and
 $\DD\big(\pi_I(v(\cdot))\big)=
\DD(I^0(\cdot))$. Other way round, if $v(\tau)$ is a regular weak solution of \eqref{eff},
then $I(\tau)=\pi_I (v(\tau))$ is a weak solution of \eqref{k33}.
\smallskip

We do not know if a regular weak solution of problem \eqref{k33}, \eqref{k4} is unique.
 But from Theorem~B
we know that a regular weak solution of the Cauchy problem for the effective
 equation \eqref{k7} is
unique, and through Theorem~C it implies uniqueness of a solution for
 \eqref{k33}, \eqref{k4}  as in item~ii). This proves Theorem~A.

In Section 5 we evoke  some intermediate results from \cite{KP08} to show that after
averaging in $\tau$  distribution of the actions of a solution $u^\nu$ for \eqref{kdv}
become asymptotically (as $\nu\to0$)
independent from distribution of the angles, and the angles become uniformly distributed on
the torus $\T^\infty$. In particular, for  any continuous function $f\ge0$ such that $\int f=1$, we
have
$$
\int_0^Tf(\tau)\DD\vp(u^\nu(\tau))\,d\tau\strela d\theta\qquad{\rm as}\quad \nu\to0.
$$
\medskip

The recipe \eqref{eff} allows to construct effective equations for other perturbations of KdV,
with or without randomness. These are non-local nonlinear equations with interesting
properties. In particular, if the perturbation is given by a Hamiltonian nonlinearity
$\nu(\p/\p x)f(u,x)$,  then the effective system is Hamiltonian and integrable
(its hamiltonian depends only on the actions $I$).

The effective equations \eqref{k7} are instrumental to study other problems, related to
eq.~\eqref{kdv}. In particular, they may be used to prove the convergence \eqref{k5} when
$u_\nu(\tau)$ are stationary solutions of \eqref{kdv} and $I^0(\tau)$ is a stationary solution
for \eqref{k33}. See \cite{K10} for discussion of these and some related results; the proof will be
published elsewhere. We are certain that corresponding effective equations may be used
to study other perturbations of KdV, including the damped equation \eqref{kdv}${}_{\eta=0}$.

Our results are related to the Whitham averaging for perturbed KdV, see Appendix.

\medskip
\noindent {\it Agreements.} Analyticity of maps $B_1\to B_2$ between
Banach spaces $B_1$ and $B_2$, which are the real parts of complex
spaces $ B_1^c$ and $B_2^c$, is understood in the sense of
Fr\'echet. All analytic maps which we consider possess the following
additional property: for any $R$ a map analytically extends to a
complex $(\delta_R>0)$--neighbourhood of the ball $\{|u|_{B_1}<R\}$
in $B_1^c$. Such maps are Lipschitz on bounded subsets of $B_1$.
  When a property of a random variable holds
 almost sure, we often drop the specification ``a.s.".  All metric spaces are provided
 with the Borel sigma-algebras.   All sigma-algebras which
 we consider in this work are assumed to be completed with respect to the
 corresponding probabilities.

\noindent {\it  Notations.}  $\chi_A$ stands for the indicator
function of a set $A$ (equal 1 in $A$ and vanishing outside it).
By $\DD\xi$ we denote the distribution (i.e. the law) of a random variable
$\xi$. For a measurable set $Q\subset\R^n$ we denote by $|Q|$ its
Lebesgue measure.

\medskip\par
\noindent{\it Acknowledgments.} I wish to thank for discussions and advises
B.~Dubrovin, F.~Flandoli, N.~V.~Krylov, Y.~Le~Jan, R.~Liptser, S.~P.~Novikov and
B.~Tsirelson.
I am especially obliged to A.~Piatnitski for explaining me some  results, related to the
constructions in Section~1.4, and for critical remarks on a preliminary version of this
work.

\section{Preliminaries}\label{s_prel}

Solutions of problem \eqref{kdv}, \eqref{k0}
satisfy uniform in $t$ and $\nu$ a priori estimates (see \cite{KP08}):
\begin{equation}\label{a_est1}
{\mathbb E}\big\{\exp\big(\sigma\|u(t)\|_0^2\big)\big\}\le c_0,\quad
\quad{\mathbb E}\big(\|u(t)\|_m^k\big)\le c_{m,k},
\end{equation}
%
%
for any $m,k\ge 0$ and any $\sigma\le (2\max b_s^2)^{-1}$. Here $\|\cdot\|_m$
stands for the norm in the Sobolev space $H^m=\{u\in H^m(S^1):\int u\,dx=0\}$,
$\ \|u\|_m^2=\int (\p^m u/\p x^m)^2\,dx.$
  To study further properties
of solutions for \eqref{kdv} with small $\nu$ we need the nonlinear Fourier transform
$\Psi$ which integrates the KdV equation.

\subsection{Nonlinear Fourier transform for KdV}

For $s\ge0$ denote by $h^s$ the Hilbert space, formed by the vectors\\
$v=(v_1,v_{-1}, v_2, v_{-2},\dots)$ and provided with  the weighted
 $l_2$-norm $|\cdot|_s$,
$$
|v|_s^2=\sum\limits_{j=1}^\infty j^{1+2s}(v_j^2+v_{-j}^2).
$$
We set ${\bf v}_j=\left(\begin{array}{c}\!\!v_j\!\! \\ \!\!v_{-j}\!\!\end{array}
\right)$,
$j\in\mathbb Z^+=\{j\ge1\}$, and will also write vectors $v$ as $v=(\vv_1,\vv_2,
\dots)$.
For any $v\in h^s$ we define the vector of actions
$\
I(v)=(I_1,I_2,\dots),\quad I_j=\frac12|\vv_j|^2.
$
Clearly $I\in h^s_{I+}\subset h^s_I$. Here $h^s_I$ is the weighted $l^1$-space,
$$
h^s_I=\big\{I\,:\, |I|_{h^s_I}=2\sum\limits_{j=1}^\infty j^{1+2s}|I_j|<\infty\big\},
$$
and $h^s_{I+}$ is the positive octant $h^s_{I+}=\{h\in h^s_I: I_j\ge0\ \forall\,j\}$.

\begin{theorem}\label{t_kp08}
There exists an analytic diffeomorphism $\Psi\,:\,H^0\mapsto h^0$ and an
analytic functional $K$ on $h^0$ of the form
$K(v)=\widetilde K(I(v))$, where the function $\widetilde K(I)$ is analytic on
the space $h^0_{I+}$,
with the following properties
\begin{enumerate}
\item  The mapping $\Psi$ defines, for any $m\in\mathbb Z^+$, an analytic diffeomorphism
$\Psi\,:\,H^m\mapsto h^m$;
\item  The map $d\Psi(0)$ takes the form $\sum u_s e_s\mapsto v$, $v_s=|s|^{-1/2}u_s$;
\item A curve $u\in C^1(0,T;H^0)$ is a solution of the KdV equation
 (\ref{kdv})${}_{\nu=0}$
if and only if $v(t)=\Psi(u(t))$ satisfies the equation
\begin{equation}\label{BNF}
\dot{{\bf v}}_j=\left(\begin{array}{cc}0&-1\\1&0\end{array}
\right)\frac{\partial\widetilde K}{\partial I_j}(I){\bf v}_j,\qquad j\in\mathbb Z^+.
\end{equation}
\item For $m=0,1,2,\dots$ there are polynomials $P_m$ and $Q_m$ such that
$$
|d^j\Psi(u)|_m\le P_m(\|u\|_m),\quad \|d^j(\Psi^{-1}(v))\|_m\le Q_m(|v|_m),\ \
j=0,1,2,
$$
for all $u$ and $v$ and all $m\ge 0$.
\end{enumerate}
\end{theorem}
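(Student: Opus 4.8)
The plan is to obtain $\Psi$ as the Birkhoff normal form map for the KdV hierarchy on the circle, assembling it from the inverse spectral transform for the Hill operator together with quantitative estimates on the resulting coordinates. First I would start from the Schr\"odinger operator $L_u=-\partial_x^2+u$ on $S^1$, whose periodic and antiperiodic eigenvalues $\lambda_0(u)\le\lambda_1(u)\le\lambda_2(u)\le\dots$ are the classical first integrals of KdV; the lengths of the spectral gaps, normalized with the help of the Dirichlet spectrum and the Abel map on the spectral curve, define on finite-gap potentials the action variables $I_j$ and conjugate angles $\vp_j$, hence a map $u\mapsto v$ with $\vv_j=(v_j,v_{-j})$ and $\tfrac12|\vv_j|^2=I_j$. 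Following Kappeler--P\"oschel \cite{KaP} (see also \cite{McKT76}), I would show this map extends to an analytic diffeomorphism $\Psi:H^0\to h^0$; since each flow of the KdV hierarchy becomes in these coordinates a rigid rotation $\dot{\vv}_j=\bigl(\begin{smallmatrix}0&-1\\1&0\end{smallmatrix}\bigr)\,\partial_{I_j}\widetilde K(I)\,\vv_j$ with $\widetilde K$ analytic on $h^0_{I+}$, item~3 follows with $K=\widetilde K\circ I$.

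For item~1 I would use the smoothing property of the Birkhoff map: $\Psi-d\Psi(0)$ gains one derivative, mapping $H^m$ into $h^{m+1}$, so that $\Psi$ (and likewise $\Psi^{-1}$) restricts to an analytic diffeomorphism of $H^m$ onto $h^m$ for every $m\ge0$. This rests on the sharp asymptotics $\lambda_{2n}(u),\lambda_{2n-1}(u)=n^2+O(n^{-\infty})$ for $u\in H^\infty$ and on the trace formulas expressing actions and angles through the spectral data, together with their analytic dependence on $u$. Item~2 is a direct linearization: opening the $n$-th gap at $u=0$ is governed, to first order, by the $n$-th Fourier coefficient of $u$, and tracking the normalization of the actions one finds $d\Psi(0):\sum u_s e_s\mapsto v$, $v_s=|s|^{-1/2}u_s$, which is exactly the map integrating the Airy flow $\dot u+u_{xxx}=0$.

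Finally, for item~4 I would upgrade local analyticity to the stated polynomial majorants. By the Agreement on analytic maps, $\Psi$ and $\Psi^{-1}$ extend to complex $\delta_R$-neighbourhoods of the balls $\{\|u\|_m<R\}$ and $\{|v|_m<R\}$ in the complexified spaces and are bounded there; Cauchy's formula for Fr\'echet-analytic maps then bounds $d^j\Psi(u)$ and $d^j\Psi^{-1}(v)$ for $j=0,1,2$ in terms of those sups, and choosing $R\sim\|u\|_m+1$ (resp.\ $|v|_m+1$) and estimating the sups by the growth of the spectral data yields majorants polynomial in $\|u\|_m$ (resp.\ $|v|_m$). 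The hard part will be the uniformity in $m$ of all of this --- carrying the smoothing (one extra derivative) and the polynomial growth along the whole Sobolev scale, which requires sharp control of the high periodic and Dirichlet eigenvalues of $L_u$ on smooth potentials and a quantitative version of the Kappeler--P\"oschel analysis; this is the technical core, and is where I would lean on the estimates of \cite{KaP} and their refinements in \cite{KP08}.
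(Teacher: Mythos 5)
The paper does not prove this theorem: it is quoted as background, with items 1--3 attributed to \cite{KaP} and item 4 to \cite{KP08}, so there is no in-paper argument to compare yours with step by step. Your sketch follows the route those references take --- Birkhoff coordinates built from the periodic and Dirichlet spectral data of the Hill operator $-\partial_x^2+u$, actions from the normalized gap lengths, angles from the Abel map, item 3 because every Hamiltonian of the KdV hierarchy becomes a function of the actions alone --- and your plan for items 2 and 4 (linearization at $u=0$ via the first-order opening of the $n$-th gap; Cauchy estimates on the complex $\delta_R$-neighbourhoods from the Agreements, combined with polynomial bounds on $\Psi$ itself) is consistent with how these statements are obtained in the cited sources.

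There is, however, one genuine logical defect. You derive item 1 from the smoothing property that $\Psi-d\Psi(0)$ maps $H^m$ into $h^{m+1}$. In this paper that statement is Proposition \ref{amplif_1_1}, presented as an \emph{amplification} of Theorem \ref{t_kp08} that is strictly harder than the theorem itself: only a local version near the origin is available in the literature (\cite{KPer10}), the even case is handled by an ad hoc argument, and the author explicitly defers the proof of the general case to a separate publication. Resting item 1 on it inverts the logical order and makes the background theorem depend on a result that is not yet proved. Item 1 does not require the gain of a derivative: in \cite{KaP} the fact that $\Psi$ restricts to a bi-analytic diffeomorphism $H^m\to h^m$ is obtained directly from the decay of the gap lengths $\gamma_n(u)$, which lie in the same weighted $\ell^2$ class as the Fourier coefficients of $u$, without asserting any extra regularity for the nonlinear part of $\Psi$. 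You should replace that step by the direct argument and keep the one-smoothing property where the paper keeps it, namely as a separate, stronger statement needed only for the uniqueness analysis in Section 4.
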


See  \cite{KaP} for items 1-3 and \cite{KP08} for item 4.
 The coordinates $v=\Psi(u)$ are called  the
 {\it  Birkhoff coordinates} and the form \eqref{BNF} of KdV --
its {\it  Birkhoff normal form}.

The analysis in Section~4 requires the following amplification of
 Theorem~\ref{t_kp08}, stating that the nonlinear Fourier transform $\Psi$
 ``is quasilinear'':

\begin{proposition}\label{amplif_1_1}
For any $m\ge 0$ the map $\Psi-d\Psi(0)$ defines an analytic mapping from $H^m$
 to $h^{m+1}$.
\end{proposition}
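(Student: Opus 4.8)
The claim is that the nonlinearity of the Birkhoff map, $\Psi - d\Psi(0)$, gains one derivative: it sends $H^m$ to $h^{m+1}$, not merely to $h^m$ as Theorem \ref{t_kp08}(1) already gives. The natural strategy is to exploit the explicit asymptotics of the Birkhoff coordinates coming from inverse spectral theory for Hill's operator. Recall that $v_j = \Psi(u)_j$ is built from the spectral gaps (lengths of the $j$-th spectral interval of $-\partial_x^2 + u$) together with the phase data on the isospectral torus; the linearization $d\Psi(0)$ is the rescaled Fourier transform $u_s \mapsto |s|^{-1/2} u_s$ of item (2). So the content is: the discrepancy between the $j$-th Birkhoff datum and the $j$-th (rescaled) Fourier coefficient of $u$ decays one power of $j$ faster than the Fourier coefficient itself.

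First I would set up the comparison on a bounded ball $\{\|u\|_m \le R\}$, using the quantitative bounds of Theorem \ref{t_kp08}(4) and the fact (from the \emph{Agreements} paragraph) that $\Psi$ extends analytically to a complex $\delta_R$-neighbourhood of such a ball in $H^m_{\mathbb C}$. The key estimate to establish is a pointwise-in-$j$ bound of the form $|{\bf v}_j - (d\Psi(0)u)_j| \le C_m(R)\, j^{-1} a_j$ with $(a_j)\in\ell^2$ weighted by $j^{1+2m}$ — equivalently $|(\Psi - d\Psi(0))(u)|_{m+1} \le C_m(R)\|u\|_m$. I would obtain this from the known refined asymptotics of the Its–Matveev / McKean–Trubowitz type formulas for the Birkhoff coordinates: the $j$-th gap length equals $2\pi|j|^{-1/2}\cdot(\text{$j$-th Fourier coefficient of }u)$ up to an error controlled by $\|u\|_m$ and decaying like $j^{-1}$ times an $\ell^2$ weight-$j^{1+2m}$ sequence, with analogous control on the angular variables. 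This is essentially a standard (if technical) fact in the inverse-spectral literature; I would cite \cite{KaP} and \cite{KP08} for the underlying estimates and carry out only the bookkeeping that converts the $\ell^2$-with-weight-$j^{1+2m}$ error bound into membership in $h^{m+1}$.

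Finally, I would promote the pointwise bound to analyticity of $\Psi - d\Psi(0)$ as a map $H^m \to h^{m+1}$: since $\Psi$ is already analytic $H^m \to h^m$ with the complex-neighbourhood extension property, and $d\Psi(0)$ is a bounded linear map that does \emph{not} gain a derivative, it suffices to check that the difference, which is holomorphic and locally bounded as a map into $h^{m+1}$ (by the estimate just described, applied on the complexified ball), is therefore analytic into $h^{m+1}$ — a locally bounded holomorphic map between complex Banach spaces is analytic. The main obstacle is the first step: extracting from the inverse-spectral formulas the correct one-power gain uniformly in $j$ and locally uniformly in $u$, i.e.\ verifying that the remainder terms in the asymptotic expansion of the Birkhoff coordinates are genuinely of order $j^{-1}$ relative to the leading Fourier term, rather than merely of the same order. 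Everything after that is summation and the abstract analyticity principle.
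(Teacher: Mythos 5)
Your overall strategy --- compare the Birkhoff coordinates with the rescaled Fourier coefficients via inverse spectral asymptotics for Hill's operator, establish a one-power-of-$j$ gain in the remainder locally uniformly on complexified balls, and then upgrade local boundedness into $h^{m+1}$ to analyticity --- is exactly the route the paper indicates. But there is a genuine gap: the ``key estimate'' you defer to the literature, namely that $|{\bf v}_j-(d\Psi(0)u)_j|$ is $O(j^{-1})$ relative to the leading Fourier term with the required $\ell^2$-weighted control, \emph{is} the entire content of the proposition, and it is not available in \cite{KaP} or \cite{KP08}. Theorem~\ref{t_kp08}(4) only bounds $\Psi$ and its differentials as maps $H^m\to h^m$ with no derivative gain, and the classical Marchenko--Ostrovskii / Its--Matveev asymptotics control the \emph{gap lengths} $\gamma_n$ (hence the actions $I_j$), not the full two-component Birkhoff data ${\bf v}_j=(v_j,v_{-j})$. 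The refined asymptotics of the angular part of $\Psi$ is precisely the hard, non-standard piece.

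This is reflected in what the paper actually does: it does \emph{not} prove the proposition in general. It (i) cites \cite{KPer10} for the local statement about the germ of $\Psi$ at the origin; (ii) proves the case of even potentials, where $v_{-j}\equiv 0$ so the angles are trivial and the claim reduces to gap-length asymptotics (via Korotyaev's Theorem~1.2, formula (1.13), combined with Marchenko--Ostrovskii); (iii) discusses the finite-gap manifolds; and then explicitly states that the proof in the general case, based on the spectral theory of Hill operators, will appear in a separate publication. So your plan is not wrong in direction, but the step you label as ``standard (if technical)'' and propose to handle by citation is exactly the step that cannot be handled by citation; as written, the proposal assumes the conclusion. Your final step (a locally bounded holomorphic map into the Hilbert space $h^{m+1}$, already known to be holomorphic into $h^m$, is analytic into $h^{m+1}$) is fine, but it only kicks in once the uniform-in-$j$ remainder estimate --- including the angular components --- has actually been established.
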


i) A local version of the last statement which deals with the germ of $\Psi$ at the
 origin, is established in \cite{KPer10}.

ii) Consider the restriction of $\Psi$ to the subspace $H^m_{\rm even}\subset H^m$ formed
 by even functions. The map $\Psi$, how it is defined in \cite{KaP}, maps
$H^m_{\rm even}$ to the subspace $h^m_e\subset h^m$, where
$$
h_e^m=\{v=({\bf v}_1,{\bf v}_2,\dots)\,:\,v_{-j}=0 \ \ \hbox{for all }j\in\mathbb Z^+ \}.
$$
For $u\in H^m_{\rm even}$ we have $u_{-s}=0$ for any $s\in\mathbb Z^+$.
 Thus $H^m_{\rm even}$ may be identified with the space
$$
l^2_m=\big\{(u_1,u_2,\dots)\,:\, \sum\limits_{r=1}^\infty r^{2m}u_r^2<\infty\big\}.
$$
Considering the asymptotic expansion for the actions $I_j$ for $u\in H^m$ we have $\
v_j=\pm\sqrt{2I_j}=j^{-1/2}(u_j+j^{-1}w_j(u)),
$
where the map $u\mapsto w(u)$ from $l^2_m$ into itself, $m\ge 0$, is analytic.
See for instance Theorem 1.2, formula (1.13), in \cite{Kor08} where one should
 use the Marchenko-Ostrovskii asymptotic formula to relate Fourier coefficients
 of a potential with the sizes $\gamma_n$ of open gaps of the corresponding Hill
 operator. Thus, for the restriction of $\Psi$ to the space $H_{\rm even}^m$ the
 assertion also holds.

 iii) The $n$-gap manifold $\TT^{2n}$ is the set of all $u(x)$ such that
 $v=\Psi(u)$ satisfies  $\vv_j=0$ if $j\ge n+1$. This is a  $2n$-dimensional analytic
 submanifold of any space $H^m$. It passes through $0\in H^m$ and goes to
 infinity; it can be defined independently from the map $\Psi$, see \cite{K2, KaP}. In
 a suitable neighbourhood of $\TT^{2n}$ there is an analytical transformation which
 put the KdV equation to a partial Birkhoff normal form (sufficient for
 purposes of the KAM-theory). The non-linear part of this map also is one-smoother
 than  its linear part, see in \cite{K2}.
 \smallskip

Proof of the Proposition in the general case, based on the spectral theory of
Hill operators, will be given in a separate publication.

\subsection{Equation (\ref{kdv}) in Birkhoff coordinates}

Applying the It\^o formula to the nonlinear Fourier transform  $\Psi$, we see that
for $u(t)$, satisfying  (\ref{kdv}), the function ${v}(\tau)=\Psi(u(\tau))$,
where $\tau=\nu t$, is a  solution of  the system
\begin{equation}\label{kdv_bir}
d\vv_k=\nu^{-1}d\Psi_k(u)V(u)d\tau+P_k^1({v})d\tau+P_k^2({v})d\tau
+\sum\limits_{j\ge1}B_{kj}({v})d\bb_j(\tau),\quad k\ge1.
\end{equation}
Here $\bb_j=\left(
\begin{array}{c}
\beta_j\\
\beta_{-j}
\end{array}
\right)\in \mathbb R^2$, $V(u)=-u_{xxx}+6uu_x$ is the vector field of  KdV,
 $P^1({v})=d\Psi(u)u_{xx}$ and  $P^2({v})d\tau$ is the It\^o term,
$$
P_k^2({v})=\frac{1}{2}\sum\limits_{j\ge1}b_j^2\left[d^2
\Psi_{kj}(u)\left(\Big({1\atop 0}\Big),\Big({1\atop 0}\Big)\right)+ d^2
\Psi_{kj}(u)\left(\Big({0\atop 1}\Big),\Big({0\atop 1}\Big)\right)
\right]\in\R^2.
$$
Finally, the dispersion matrix $B$ is formed by $2\times2$-blocks $B_{kj},\  k,j\ge1$,
 where
$$
B_{kj}(u)=b_j\left(d\Psi(u)\right)_{kj}.
$$
Equation (\ref{kdv_bir}) implies the following relation for the actions vector
$I=(I_1,I_2,\dots)$:
\begin{equation}\label{eq_for_i}
dI_k=\vv_k^tP_k^1({v})d\tau+\vv_k^tP_k^2({v})d\tau+ \frac{1}{2}
\sum\limits_{j\ge1}\|B_{kj}\|^2_{HS}d\tau+ \sum\limits_{j\ge1}\vv_k^tB_{kj}({v})
d\bb_j(\tau)\,,
\end{equation}
$k\ge1$.
Here  $\|B_{kj}\|^2_{HS}$ is the squared Hilbert-Schmidt norm of the $2\times2$
matrix $B_{kj}$, i.e. the sum of squares of all its four elements.

Estimates \eqref{a_est1} and eq.~\eqref{eq_for_i}  imply that
\begin{equation}\label{xxx}
    \E\sup_{0\le\tau\le T}|I(\tau)|^k_{h^m_I}\le C_{m,k}\qquad\forall\, m,k\ge0.
\end{equation}
See in \cite{KP08}.

\subsection{Averaged equations}


For a vector ${v}=(\vv_1,\vv_2,\dots)$ denote by $\varphi(v)=(\varphi_1,
 \varphi_2,\dots)$
 the vector of angles. That is $\varphi_j$ is the argument of the vector $\vv_j\in\R^2$,
 $\varphi_j=\arctan(v_{-j}/v_j)$ (if $\vv_j=0$, we set $\varphi_j=0$). The vector
 $\vp(v)$ belongs to the infinite-dimensional torus $\T^\infty$. We provide the latter
with the Tikhonov topology (so it becomes a compact metric space) and the Haar measure
$d\theta=\prod (d\theta_j/2\pi)$.
 We will identify
 a vector $v$ with the pair $(I,\varphi)$ and  write $v=(I,\vp)$.

The torus $\mathbb T^\infty$ acts on each
 space $h^m$ by the linear rotations $\Phi_\theta,\theta\in\T^\infty$,
 where $\Phi_\theta\,:\,(I,\varphi)
\mapsto(I,\varphi+\theta)$.
For any continuous function $f$ on $h^m$ we denote by $\langle f\rangle$ its angular
 average,
$$
\langle f\rangle({v})=\int\limits_{\mathbb T^\infty}f(\Phi_\theta{v})d\theta.
$$
 The function
$\langle f\rangle({v})$ is as smooth as $f({v})$ and depends only on $I$.
Furthermore, if $f({v})$ is analytic on $h^m$, then  $\langle f\rangle({I})$
is analytic on $h^m_I$; for the proof see \cite{KP08}.

Averaging equations  (\ref{eq_for_i}) using the rules of stochastic averaging
 (see \cite{Khas68, FW03}), we get the averaged system
\begin{equation}\label{aveq_i}
\begin{split}
\displaystyle
dI_k(\tau)&=\langle \vv_k^tP_k^1\rangle(I)d\tau+ \langle \vv_k^tP_k^2\rangle(I)d\tau \\
&+\frac{1}{2}\left\langle \sum\limits_{j\ge1} \|B_{kj}\|_{HS}^2\right\rangle(I)\,
d\tau+\sum\limits_{j\ge1} K_{kj}(I)d\bb_j(\tau),\qquad k\ge 1\,,
\end{split}
\end{equation}
with the initial condition
\begin{equation}\label{IC}
    I(0)=I_0=I(\Psi(u_0)).
\end{equation}
Here the dispersion matrix $K$ is a square root of the averaged diffusion matrix $S$,
\begin{equation}\label{diff}
S_{km}(I)\mathop{=}\limits^{\rm def}\left\langle\sum\limits_{l\ge1}\vv_k^tB_{kl}
 \vv_m^tB_{ml}\right\rangle(I),
\end{equation}
not necessary symmetric. That is,
\begin{equation}\label{sqrt}
\sum\limits_{l\ge1}K_{kl}(I)K_{ml}(I)=S_{km}(I)
\end{equation}
(we abuse the language since the l.h.s. is not $K^2$ but $KK^t$). If in \eqref{aveq_i}
we replace $K$
by another square root of $S$, we will get a new equation which has the same set
of weak solutions, see \cite{Yor74}.

Note that system  (\ref{aveq_i}) is very irregular: the operator
 $\langle G_k^1\rangle$ is unbounded and the matrix $K(I)$ is not
 Lipschitz continuous in $I$.

\subsection{Averaging principle}

Let us fix any $p\ge3$ and denote
\begin{equation}\label{notat}
    \cH_I=C([0,T],h^p_{I+}),\qquad \cH_v=C([0,T],h^p).
\end{equation}
In \cite{KP08} we have proved the following results:
given any  $T>0$, for the process
 $I^\nu(\tau)=\{I(v^\nu(\tau))\,:\,0\le\tau\le T\}$ it holds
\begin{theorem}\label{t_comp1}
Let $u^\nu(t)$, $0<\nu\le 1$, be a solution of (\ref{kdv}), (\ref{k0}) and
$v^\nu(\tau)=\Psi(u^\nu(\tau))$, $\tau=\nu t$, $\tau\in[0,T]$.  Then the family of measures
${\cal D}(I^\nu(\cdot))$ is tight in the space of (Borel) measures in $\cH_I$.
 Any limit point of this family, as $\nu\to0$, is the distribution of a weak solution
$I^0(\tau)$ of the averaged equation (\ref{aveq_i}), (\ref{IC}).
It satisfies the estimates
\begin{equation}\label{est}
\mathbb E\,\sup\limits_{0\le \tau\le T}|I^0(\tau)|^N_{h_I^m}<\infty
\quad \forall\,m,N\in\N,
\end{equation}
and
\begin{equation}\label{e120}
\E\int_{0}^{T}\chi_{\{I^0_k(\tau)\le\delta\}}(\tau)\,d\tau\to0\quad \text{as }\;\
\delta\to0,
\end{equation}
for each $k$.
\end{theorem}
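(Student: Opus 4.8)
The plan is to establish Theorem~\ref{t_comp1} by combining the a priori estimates \eqref{a_est1}, \eqref{xxx} with a weak-compactness (martingale-solution) argument carried out on the level of the $v$-equations \eqref{kdv_bir}, then projecting down to the actions. First I would fix $p\geq3$ and use \eqref{xxx}, together with the It\^o equation \eqref{eq_for_i} for $I_k$, to bound increments of $I^\nu$: writing $I^\nu(\tau_2)-I^\nu(\tau_1)$ as a sum of a drift integral and a stochastic integral, I would estimate the drift part using the polynomial bounds on $d\Psi$, $d^2\Psi$ from Theorem~\ref{t_kp08}(4) and the moment bounds \eqref{a_est1}, and the martingale part via Burkholder--Davis--Gundy plus the same bounds on $B_{kj}=b_j(d\Psi)_{kj}$. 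This yields a uniform-in-$\nu$ H\"older-type estimate $\E|I^\nu(\tau_2)-I^\nu(\tau_1)|_{h^{p}_I}^{N}\lesssim|\tau_2-\tau_1|^{N/2}$ for $N$ large, which by Kolmogorov's criterion gives tightness of $\{\DD(I^\nu(\cdot))\}$ in $\cH_I=C([0,T],h^p_{I+})$; one has to be slightly careful that the limit stays in the positive octant, but that is closed and preserved under weak limits.

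Next, for the identification of limit points I would take a subsequence $\nu_j\to0$ with $\DD(I^{\nu_j})\rightharpoonup\mu$ in $\cH_I$, and realize (Skorokhod) these on one probability space so that $I^{\nu_j}\to I^0$ a.s. in $\cH_I$. The key technical point is averaging: in \eqref{eq_for_i} the drift and diffusion coefficients depend on the full vector $v^\nu=(I^\nu,\vp^\nu)$ through fast angles $\vp^\nu$, whose equation $d\vp=\nu^{-1}W(I)d\tau+\dots$ shows they rotate with frequency $\nu^{-1}$. Using the non-degeneracy of the KdV frequency map $W(I)$ (the coefficients $b_s\neq0$ guarantee enough actions stay positive, cf.\ \eqref{e120}) one shows that along the flow the angles equidistribute, so that time-averages of $f(I^\nu,\vp^\nu)$ converge to the space-averages $\langle f\rangle(I^0)$. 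Concretely I would test \eqref{eq_for_i} against smooth cylindrical functions, split $[0,T]$ into blocks of length $\sim\nu^\alpha$, and on each block freeze $I^\nu$ while averaging out $\vp^\nu$; the error terms are controlled by the modulus of continuity of $I^\nu$ (from the tightness estimate) and by an estimate showing $\vp^\nu$ makes $\gtrsim\nu^{\alpha-1}$ turns per block. This is where the result of Khasminski/Freidlin--Wentzell type reasoning enters, and it is genuinely the crux.

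The main obstacle is precisely this averaging step in infinite dimensions: one cannot directly quote the finite-dimensional averaging theorems of \cite{Khas68, FW03} because there are infinitely many angles, the frequencies $W(I)$ are unbounded, and small actions make individual angles ill-defined. The remedy, following \cite{KP08}, is to exploit that only finitely many low modes carry most of the mass (by the weighted norms and \eqref{xxx}), truncate to $N$ modes, run the finite-dimensional averaging there with a uniform rate, and then let $N\to\infty$ using the tail estimates; the contribution of high modes to drift and diffusion is uniformly small in $h^p_I$. A secondary difficulty is that the limiting diffusion matrix $S$ in \eqref{diff} need only be controlled through $KK^t$, not through a continuous choice of square root, so the limit process $I^0$ should be characterized as a \emph{weak} (martingale) solution of \eqref{aveq_i}: I would verify that $I^0$ together with a suitably constructed Brownian motion satisfies the martingale problem associated to the generator of \eqref{aveq_i}, which by \cite{Yor74} is independent of the choice of square root of $S$. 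Finally, the estimates \eqref{est} pass to the limit by Fatou applied to \eqref{xxx}, and \eqref{e120} follows from a separate argument (already in \cite{KP08}) showing that a uniformly positive fraction of time each action $I^\nu_k$ stays away from $0$, uniformly in $\nu$, which survives the weak limit.
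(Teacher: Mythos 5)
Your outline follows essentially the same route as the source of this theorem: the paper quotes Theorem~\ref{t_comp1} wholesale from \cite{KP08}, and your ingredients --- tightness from the moment bounds \eqref{a_est1}, \eqref{xxx} together with a Kolmogorov-type increment estimate for $I^\nu$ in $h^p_I$; identification of limit points by the Khasminskii block-averaging scheme truncated to finitely many modes; and characterization of the limit as a solution of the martingale problem so that the ambiguity in the square root of $S$ is harmless by \cite{Yor74} --- are exactly those of that reference. The estimate \eqref{est} by Fatou from \eqref{xxx} is also as in the paper.

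There is, however, one point where your sketch is genuinely too weak, and it concerns precisely the part of the proof that the present paper does supply (the Appendix re-proves \eqref{e120} because the statement of Lemma~4.3 in \cite{KP08} was flawed). What is needed is the uniform-in-$\nu$ occupation estimate \eqref{h77}: $\E\int_0^T\chi_{\{I_k^\nu(\tau)\le\delta\}}\,d\tau\to0$ as $\delta\to0$, uniformly in $\nu$. Your stated substitute --- that ``a uniformly positive fraction of time each action stays away from $0$'' --- is strictly weaker and does not imply \eqref{e120}; one must show the expected time spent \emph{below} level $\delta$ vanishes with $\delta$, not merely that some time is spent above a fixed level. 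The mechanism is specific and non-obvious: one couples $v^\nu_P$ to an auxiliary process $\tilde v^N$ with the same actions, obtained by composing with $SO(2)$-rotations $U(\tilde\vv_k,\vv_k)$ so as to remove the fast $\nu^{-1}$-rotation; the resulting $\tilde v^N$ is an It\^o process with bounded drift and uniformly elliptic diffusion \eqref{f2}, to which Krylov's estimate applies and yields $\E\int_0^T\chi_{\{I_k\le\delta\}}\,d\tau\le C\delta$. Two further remarks: the logical order matters --- \eqref{h77} is an \emph{input} to the block-averaging step (it controls the time during which angles are ill-defined and the averaging degenerates), whereas your sketch invokes \eqref{e120}, a statement about the limit $I^0$, inside the averaging argument for $I^\nu$; and since $\chi_{\{I_k\le\delta\}}$ is not continuous, passing \eqref{h77} to the weak limit requires the usual sandwich by continuous functions. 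With these repairs the proposal is sound.
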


\noindent{\it Remarks.} 1) The convergence \eqref{e120} is proved
in Lemma~4.3 of \cite{KP08}.
There is a flaw in the {\it statement}
of Lemma~4.3: the convergence \eqref{e120} is there claimed for any fixed $\tau$
 (without integrating in $d\tau$). This is true only for the case of stationary
solutions, cf. the next remark.
The proof of the main results in \cite{KP08} uses exactly \eqref{e120},
 cf. there estimate (5.7). See below Appendix, where the proof of Lemma~4.3 is re-written
 for purposes of this work.

2)
A similar result holds when $ u^\nu(t)= u_{\rm st}^\nu(t)$, $t\ge 0$,
 is a stationary solution of (\ref{kdv}), see \cite{KP08}.
\medskip

\subsection{Dispersion matrix $K$}

The matrix $S(I)$ is symmetric and positive but its spectrum contains $0$.
Consequently, its symmetric square root $\sqrt{S}(I)$ has low regularity in $I$
\footnote{Matrix elements of $\sqrt{S}(I)$ are Lipschitz functions of the arguments
$\sqrt{I_1}, \sqrt{I_2},\dots$. Cf.  \cite{IW}, Proposition~IV.6.2.}
at points of the set
 $$
 \p\, h^p_{I+}=\{I\in h^p_{I+} :I_j=0\quad\text{for some}\quad j\}.
 $$
Now we construct a `regular' square root $K$ (i.e. a dispersion matrix) which is
 an analytic function of $v$, where $I(v)=I$. This regularity will be sufficient for our
  purposes.

We will obtain a dispersion matrix $K=\{K_{lm}\}(v)$, $I(v)=I$,  as the matrix of a
 dispersion operator
$\
{\bf K}\,:\,Z\ \longrightarrow\ l_2,
$
where $Z$ is an auxiliary separable Hilbert space and the operator depends on the
parameter $v$, ${\bf K}={\bf K}(v)$.
The matrix $K$ is written with respect to some orthonormal basis in $Z$ and the
 standard basis $\{f_j, j\ge1\}$  of $l_2$. Below for a space $Z$ we take a
 suitable $L^2$-space $Z=L^2(X,\mu(dx))$. For any Schwartz kernel $\MM(v)=\MM(j,x)(v)$,
depending on the parameter $v$, we denote by $\Op(\MM(v))$ the corresponding integral
 operator from $L^2(X)$ to $l_2$:
$$
\Op(\MM(v))\,g(\cdot)=\sum_j f_j\int \MM(j,x)(v)g(x)\,\mu(dx).
$$
We will define the dispersion operator
 ${\bf K}(v)$ by its Schwartz kernel ${\KK}(j,x)(v)$, ${\bf K}(v)=\Op (\KK(v))$.
  For any choice of the
 orthonormal basis in $Z$ the Percival identity holds:
\begin{equation}\label{Parc}
\sum\limits_{l\ge1} K_{kl}(v)K_{ml}(v)\,=\,\int_X \KK(k,x)(v)\KK(m,x)(v)\,\mu(dx)\quad
\forall\,k,m.
\end{equation}

Since a law of a zero-meanvalue Gaussian process is defined by its correlations, then
due to \eqref{Parc} the law of the process
$\
\sum_{l\ge1} f_l
\sum_{m\ge1} K_{l m}\beta_m(\tau) \in l_2
$
does not depend on the choice of the orthonormal basis in $Z$: it depends only
on the correlation  operator $\bf K$ (i.e. on its kernel
 $\KK$) and not on a matrix $K$. Accordingly, we will {\it formally}
 denote the differential of this process as
\begin{equation}\label{symb}
\sum_{l\ge1}
f_l\sum_{m\ge1}K_{lm}\,d\beta_m(\tau)=\sum_{l\ge1}
f_l
\int_X \KK(l,x)d\beta_x(\tau)\,\mu(dx),
\end{equation}
where $\beta_x(\tau),\ x\in X$, are standard independent Wiener processes on some
probability space.
\footnote{We cannot find continuum independent copies of a random variable on a
standard probability space. So indeed this  is just a notation.}
Naturally, if in a stochastic equation the diffusion is written in the
 form \eqref{symb}, then
only weak solutions of the equation are well defined.
 This notation well agrees with the It\^o formula. Indeed, denote the
 differential in \eqref{symb} by $d\eta$ and let $f(\eta)$ be a $C^2$-smooth
 function. Then due to \eqref{Parc}
\begin{equation}\label{Ito}
\begin{split}
&df(\eta)=\Big(\frac12 \sum_{k, r}\frac{\p^2 f}{\p\eta_k \p\eta_r}\sum_m
 K_{km}K_{rm}\Big)d\tau    +
\sum_{k,m}\frac{\p f}{\p\eta_k}K_{km}\,d\beta_m(\tau)\\
&=\big(\frac12 \sum_{k,r}\frac{\p^2 f}{\p\eta_k \p\eta_r}\int_X
\KK(k,x)\KK(r,x)\,\mu(dx)\Big)d\tau
\\
&\qquad\qquad\qquad
+\sum_{k}\frac{\p f}{\p\eta_k}\int_X \KK(k,x)\,d\beta_x(\tau)\,\mu(dx).
\end{split}
\end{equation}
\smallskip

Due to \eqref{Parc} the matrix $K(v)$ satisfies equation \eqref{sqrt} if
\begin{equation}\label{kernrel}
\begin{split}
\int_X \KK(k,x)(v)\KK(m,x)(v)\mu(dx)=\sum\limits_{l\ge1} K_{kl}(v)K_{ml}(v)\\
=S_{km}(I)=\sum\limits_{l\ge1} \left\langle (\vv_k^tB_{kl}(v)) (\vv_m^tB_{ml}(v))
 \right\rangle.
\end {split}
\end{equation}
The matrix in the right-hand side of (\ref{kernrel}) equals
\begin{equation*}
    \begin{split}
&\sum\limits_{l\ge1}\,\int\limits_{\mathbb T^\infty}\left(
(\vv_k^tB_{kl})(\Phi_\theta v)
\right)
\left(
 (\vv_m^tB_{ml})(\Phi_\theta v)\right)d\theta\\
&
=\vv_k^t\vv_m^t \sum\limits_{l\ge1}\,\int\limits_{\mathbb T^\infty}
(\Phi^k_{-\theta_k}B_{kl}(\Phi_\theta v)) (\Phi^m_{-\theta_m}B_{ml}(\Phi_\theta v))d\theta\,,
\end{split}
\end{equation*}
where $\Phi^m_\theta$ is the linear operator in $\R^2$, rotating the $\vv_m$-component
 of a vector ${v}$ by the angle $\theta$.
Let us choose for  $X$ the space $X=\mathbb Z^+\times\mathbb T^\infty=\{(l,\theta)\}$
 and equip it with the measure $\mu(dx)=dl \times d\theta$, where $dl$ is
 the counting measure  in $\mathbb Z^+$ and $d\theta$ is the Haar measure in $\T^\infty$.
 Consider the following  Schwartz kernel $\KK$:
\begin{equation}\label{def_r}
\KK(k;l,\theta)(v)=\vv_k^t\RR(k;l,\theta)(v),\quad
\mathcal{R}(k;l,\theta)({v})=(\Phi_{-\theta_k}^kB_{kl})(\Phi_\theta({v})).
\end{equation}
Then \eqref{kernrel} is fulfilled. So
\begin{equation}\label{ssqrt}
\begin{split}
&\text{for any choice of the basis in
  $L_2(\mathbb Z^+\times\mathbb T^\infty)$
  } \\
  &\text{the matrix $K(v)$ of $\Op(\KK(v))$ satisfies  $\eqref{sqrt}$ with $I=I(v)$}.
 \end{split}
 \end{equation}
The differential \eqref{symb} depends on $v$, but its law depends only on $I(v)$.

We  formally write the averaged equation \eqref{aveq_i} with the constructed above
dispersion operator $\Op(\KK(v))$,  $I(v)=I$, as
\begin{equation}\label{0.0}
    \begin{split}
dI_k(\tau)=\langle \vv_k^tP_k^1\rangle(I)\,d\tau&+\langle
\vv_k^tP_k^2\rangle(I)\,d\tau +
\frac12\left\langle\sum_{j\ge1}\|B_{kj}\|^2_{HS}
\right\rangle(I)\,d\tau \\
&+\sum_{l\ge1} \int_{\T^\infty}\vv_k^t \RR(k,l,\theta)(v)\,d\bb_{l,\theta}(\tau)
\,d\theta.
    \end{split}
\end{equation}

Let us fix a basis in the space $L_2(\mathbb Z^+\times\mathbb T^\infty)$
 and fix the Wiener processes $\{\beta_m(\cdot), m\ge1\}$, corresponding
 to the presentation
 \eqref{symb} for the
stochastic term in \eqref{0.0}. Let $\xi\in h^p$ be a random variable,
 independent from the processes $\{\beta_m(\tau)\}$.

\begin{definition} I)
A pair of processes $I(\tau)\in h^p_I, v(\tau)\in h^p$, $0\le\tau\le T$,
 such that $I(v(\tau))\equiv I(\tau)$, $v(0)=\xi$ and
\begin{equation}\label{bound}
    \E \sup_{0\le\tau\le T}|v(\tau)|_m^N<\infty \quad \forall\,m,N,
\end{equation}
is called a regular strong  solution of \eqref{0.0} in the space $h^p_I\times h^p$,
 corresponding to the basis above and
 the Wiener processes $\{\beta_m(\cdot)\}$, if

(i) $I$ and $v$ are adapted to the filtration, generated by $\xi$ and the
 processes $\{\beta_m(\tau)\}$,


(ii) the integrated in $\tau$  version of \eqref{0.0} holds a.s.
\smallskip

II) A pair of processes $(I,v)$ is called a  regular weak  solution if it
 is a regular solution for some choice of the basis and
 the Wiener processes $\{\beta_m\}$, defined on a suitable extension of the original
 probability space (see in \cite{KaSh}).
\end{definition}

\begin{lemma}\label{l_lift}
If $(I(\tau), v(\tau)), 0\le\tau\le T$, is a  regular weak    solution of
 eq.~\eqref{0.0}, then $I(\tau)$ is a weak solution of \eqref{aveq_i},
 where $K_{km}(I)$ is the symmetric square root $\sqrt{S_{km}(I)}$.
\end{lemma}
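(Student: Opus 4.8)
The claim is that if $(I(\tau),v(\tau))$ is a regular weak solution of the ``kernel form'' equation \eqref{0.0}, then $I(\tau)$ solves the honest averaged equation \eqref{aveq_i} with dispersion matrix the symmetric square root $\sqrt S$. The point is essentially that \eqref{0.0} and \eqref{aveq_i} have the same drift coefficients and the same diffusion coefficient (the matrix $S(I)$), and two diffusion processes with the same drift and diffusion matrix solve the same martingale problem, hence the $I$-component of \eqref{0.0} is a weak solution of \eqref{aveq_i} irrespective of which measurable square root of $S$ one uses to write the noise; see \cite{Yor74} for the equivalence of weak solutions under change of square root.

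\smallskip
\textbf{Key steps.} First I would record that, by hypothesis, $I(\tau)=I(v(\tau))$ where $v$ satisfies the integrated version of \eqref{0.0} a.s., with the bound \eqref{bound} guaranteeing that all stochastic integrals below are genuine (square-integrable) martingales. Second, I would identify the generator/martingale problem: for a cylindrical $C^2$ test function $f=f(I_1,\dots,I_n)$, apply the It\^o formula \eqref{Ito} to $f(I(\tau))$. Because the noise in \eqref{0.0} is written with the Schwartz kernel $\KK(k;l,\theta)(v)=\vv_k^t\RR(k;l,\theta)(v)$ from \eqref{def_r}, the quadratic-variation (second-derivative) term produced by \eqref{Ito} is
\begin{equation*}
\frac12\sum_{k,r}\frac{\p^2 f}{\p I_k\,\p I_r}\int_X \KK(k,x)(v)\KK(r,x)(v)\,\mu(dx)
=\frac12\sum_{k,r}\frac{\p^2 f}{\p I_k\,\p I_r}\,S_{kr}(I(v)),
\end{equation*}
the last equality being exactly \eqref{kernrel}/\eqref{ssqrt}. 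The drift term from \eqref{Ito} combined with the drift of \eqref{0.0} reproduces the drift of \eqref{aveq_i}. Hence $f(I(\tau))-f(I(0))-\int_0^\tau(\mathcal L f)(I(s))\,ds$ is a martingale with respect to the filtration generated by $\xi$ and $\{\beta_m\}$, where $\mathcal L$ is precisely the generator of \eqref{aveq_i}: the drift coefficients of \eqref{aveq_i} are the given functions $\langle \vv_k^tP_k^1\rangle(I)+\langle \vv_k^tP_k^2\rangle(I)+\tfrac12\langle\sum_j\|B_{kj}\|_{HS}^2\rangle(I)$, and the diffusion coefficient is $S_{kr}(I)$ by \eqref{diff}. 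Third, since $S$ is symmetric positive, $\sqrt S$ is a measurable square root, and the solution of the martingale problem for $(\mathcal L,\delta_{I_0})$ can be represented, on a possibly enlarged probability space, as a weak solution of the It\^o SDE \eqref{aveq_i} driven by $\{\beta_m\}$ with dispersion $K=\sqrt S$; this is the standard representation theorem for solutions of martingale problems with Lipschitz-in-$\sqrt{I_j}$ (hence locally bounded, measurable) coefficients, together with the remark already cited in the text that changing the square root of $S$ does not change the set of weak solutions \cite{Yor74}. This yields the conclusion: $I(\tau)$ is a weak solution of \eqref{aveq_i} with $K_{km}=\sqrt{S_{km}}$.

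\smallskip
\textbf{Main obstacle.} The only genuinely delicate point is the regularity/integrability bookkeeping in the infinite-dimensional setting: I must check that the stochastic integrals appearing after applying \eqref{Ito} to cylindrical $f$ are honest martingales (not merely local martingales) so that the martingale-problem identification is valid, and that the coefficients $S_{kr}(I)$, $\langle\cdot\rangle$-terms, etc., are sufficiently integrable along the trajectory. Both follow from the a priori bound \eqref{bound} together with the polynomial bounds on $\Psi$ and its differentials in Theorem~\ref{t_kp08}(4) (which control $B_{kj}$, $P^1_k$, $P^2_k$ in terms of $|v|_m$), so the argument is routine but must be stated; restricting first to test functions $f$ depending on finitely many $I_k$ and then passing to the general definition of weak solution handles the infinitely many components. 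Apart from that, the proof is a direct comparison of the two martingale problems via \eqref{Ito}, \eqref{kernrel} and \eqref{diff}.
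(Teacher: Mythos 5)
Your proposal is correct and follows essentially the same route as the paper: the paper also observes that $I(\tau)$ solves the (local) martingale problem for \eqref{aveq_i} — the identification of the diffusion coefficient with $S$ being exactly the Parseval identity \eqref{Parc}/\eqref{kernrel} you invoke — and then passes to a weak solution with the symmetric square root via the standard representation theorem and the invariance of weak solutions under change of square root (\cite{Yor74}, \cite{KaSh}). You merely spell out the It\^o-formula and integrability details that the paper leaves implicit.
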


{\it Proof.} Clearly the process $I(\tau)$ is a solution to the (local)
 martingale problem, associated with eq.~\eqref{aveq_i} (see \cite{KaSh},
 Proposition~4.2 and Problem~4.3). So $I(\tau)$ is a weak solution of
 \eqref{aveq_i}, see \cite{Yor74}
and Corollary~6.5  in \cite{KP08}.
\qed
\medskip

The representation of the averaged equation \eqref{aveq_i} in the form \eqref{0.0} is
crucial for this work. It is related to the construction of non-selfadjoint dispersion
operators in the work \cite{PP} and is inspired by it. We are thankful to A.~Piatnitski
for corresponding discussion.

\section{Effective  equations}\label{s_lifteq}

The goal of this section is to lift the averaged equation (\ref{aveq_i})
 to an  equation for the vector $v(\tau)$ which transforms to
 \eqref{aveq_i} under the mapping $v\mapsto I(v)$.
Using Lemma~\ref{l_lift} we instead lift equation \eqref{0.0}. We
 start the lifting  with the last two terms in the right hand side
 of (\ref{0.0}). They define the It\^o differential
\begin{equation}\label{zadn_ch}
\frac{1}{2}\left\langle\sum\limits_{j\ge1}\|B_{kj}\|^2_{\rm HS}\right\rangle(I)\,
 d\tau+
\sum\limits_{l\ge1}\,\int\limits_{\mathbb T^\infty}\vv_k^t \mathcal{R}(k;l,
\theta)({v}) d\bb_{l,\theta}(\tau)d\theta.
\end{equation}
Consider the  differential
$\
d\vv_k=\sum\limits_{l\ge1}\,\int\limits_{\mathbb T^\infty}\mathcal{R}(k;l,\theta)({v})
 d\bb_{l,\theta}(\tau)d\theta.
$
Due to \eqref{Ito}, for  $J_k=\frac12|\vv_k|^2$ we have
\begin{equation*}
dJ_k=\frac{1}{2}\bigg(\sum\limits_{l\ge1}\,\int\limits_{\mathbb T^\infty}\|
\mathcal{R}(k;l,\theta)\|^2_{\rm HS}d\theta\bigg)d\tau+ \sum\limits_{l\ge1}\,
\int\limits_{\mathbb T^\infty}\vv_k^t \mathcal{R}(k;l,\theta)({v})
 d\bb_{l,\theta}(\tau)d\theta.
\end{equation*}
Notice that the diffusion term in the last formula coincides with that in
 (\ref{zadn_ch}). The drift terms also are the same since
$\
\|\Phi^k_{\theta'}B_{kl}\|_{\rm HS}^2= \|B_{kl}\|_{\rm HS}^2
$
for any rotation $\Phi^k_{\theta'}$.

Now consider the first part of the differential in the right-hand side of (\ref{aveq_i}),
\begin{equation}\label{j3_def }
\left\langle\vv_k^tP_k^1\right\rangle(I)d\tau+
 \left\langle\vv_k^tP_k^2\right\rangle(I)d\tau.
\end{equation}
Recall that $P^1=d\Psi(u)u_{xx}$ with $u=\Psi^{-1}({v})$ and that $P^2({v})$
 is the It\^o term.  We have
\begin{equation*}
\begin{split}
\left\langle\vv_k^tP_k^1\right\rangle(I)&=
\int\limits_{\mathbb T^\infty}
(\vv_k^tP_k^1)(\Phi_\theta{v})d\theta=\int\limits_{\mathbb T^\infty}\vv_k^t
\left(\Phi_{-\theta_k}^k\,d\Psi_k(\Pi_\theta u)
\frac{\partial^2}{\partial x^2}\big(\Pi_\theta u\big)\right)d\theta\\
&=
\vv_k^tR_k^1(v),\qquad \;\;  u=\Psi^{-1}(v),
\end{split}
\end{equation*}
where
$\
R_k^1(v)=\int\limits_{\mathbb T^\infty}\Phi_{-\theta_k}^k\,d\Psi_k(\Pi_\theta u)
\frac{\partial^2}{\partial x^2}\big(\Pi_\theta u\big)d\theta$,
  and the operators $\Pi_\theta$ are defined by the relation
$\
\Pi_\theta u=\Psi^{-1}(\Phi_\theta v).
$
Similarly,
$$
\left\langle\vv_k^tP_k^2\right\rangle(I)=\int\limits_{\mathbb T^\infty}
(\vv_k^tP_k^2)(\Phi_\theta{v})d\theta=
\vv_k^t\int\limits_{\mathbb T^\infty}\Phi^k_{-\theta_k}P^2_k(\Phi_\theta v)\,d\theta=:
\vv_k^tR_k^2(v).
$$
Consider the differential $d\vv_k=R^1_k(v)\,d\tau + R^2_k(v)\,d\tau$.
Then $d\left(\tfrac12|\vv_k|^2\right)=\,$\eqref{j3_def }.

Now consider the system of  equations:
\begin{equation}\label{lif_sy}
d\vv_k(\tau)=R^1_k({v})d\tau+R^2_k({v})d\tau+\sum_{l\ge1}\,
\int\limits_{\mathbb T^\infty}\mathcal{R}(k;l,\theta)({v})
d\bb_{l,\theta}(\tau)d\theta,\quad k\ge 1.
\end{equation}

The arguments above prove that if $v(\tau)$ satisfies \eqref{0.0},
 then $I(v(\tau))$ satisfies (\ref{aveq_i}).
Using Lemma~\ref{l_lift} we get
\begin{proposition}\label{t_lif00}
If ${v}(\tau)$ is a regular weak solution of equation (\ref{lif_sy}),
then\\  $I(v(\tau))$ is a regular weak solution
 of \eqref{aveq_i}.
\end{proposition}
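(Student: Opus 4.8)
The plan is to verify Proposition~\ref{t_lif00} by assembling the three ingredients prepared just above its statement: the identification of the It\^o differential coming from the diffusion and the ``$\sum\|B_{kj}\|^2_{HS}$'' drift in \eqref{zadn_ch}, the identification of the $P^1$-part of the drift via the operator $R^1_k$, and the identification of the $P^2$-part via $R^2_k$. First I would let $v(\tau)$ be a regular weak solution of \eqref{lif_sy} and apply the It\^o formula \eqref{Ito} to the functions $I_k(v)=\tfrac12|\vv_k|^2$, $k\ge1$. The gradient of $I_k$ in the variable $\vv_k$ is $\vv_k$ itself and the Hessian is the identity on the $\vv_k$-block, so the It\^o formula produces exactly the drift term $\tfrac12\sum_{l\ge1}\int_{\T^\infty}\|\RR(k;l,\theta)\|^2_{HS}\,d\theta$ together with the three drift pieces $\vv_k^tR^1_k(v)$, $\vv_k^tR^2_k(v)$, and the stochastic term $\sum_{l\ge1}\int_{\T^\infty}\vv_k^t\RR(k;l,\theta)(v)\,d\bb_{l,\theta}\,d\theta$.

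Next I would match these four terms against the right-hand side of \eqref{0.0} one at a time. The computations preceding \eqref{lif_sy} already show $\langle\vv_k^tP_k^1\rangle(I)=\vv_k^tR^1_k(v)$ and $\langle\vv_k^tP_k^2\rangle(I)=\vv_k^tR^2_k(v)$ whenever $I=I(v)$; the stochastic terms agree verbatim by construction of $\RR$; and the observation that $\|\Phi^k_{\theta'}B_{kl}\|_{HS}=\|B_{kl}\|_{HS}$ combined with \eqref{def_r} gives $\sum_l\int_{\T^\infty}\|\RR(k;l,\theta)\|^2_{HS}\,d\theta=\langle\sum_{j\ge1}\|B_{kj}\|^2_{HS}\rangle(I)$. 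Hence $I(\tau):=I(v(\tau))$ satisfies the integrated form of \eqref{0.0}. I would also note the regularity bound: since $|I(v)|_{h^m_I}$ is controlled by $|v|_m^2$, the moment bound \eqref{bound} on $v$ yields $\E\sup_{0\le\tau\le T}|I(\tau)|^N_{h^m_I}<\infty$ for all $m,N$, so the pair $(I(\tau),v(\tau))$ is a regular weak solution of \eqref{0.0} in the sense of the Definition.

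Finally, having a regular weak solution of \eqref{0.0}, I would invoke Lemma~\ref{l_lift} directly: it states precisely that the $I$-component of a regular weak solution of \eqref{0.0} is a weak solution of the averaged equation \eqref{aveq_i} (with the symmetric square root $\sqrt{S}$ as dispersion matrix). Combining this with the regularity estimate gives that $I(v(\tau))$ is a regular weak solution of \eqref{aveq_i}, which is the assertion.

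The only genuinely delicate point is the justification of applying the It\^o formula \eqref{Ito} to the infinitely many functions $I_k$ simultaneously and of interchanging the $\theta$-integration with the stochastic integration in \eqref{lif_sy}; this is where the regularity hypothesis \eqref{bound} and the rapid decay of the kernels $\RR(k;l,\theta)$ (inherited from the decay of the $b_s$ and the polynomial bounds on $d\Psi$ in Theorem~\ref{t_kp08}(4)) are used, so that all the series and integrals converge in $h^p$ and the formal manipulations with \eqref{symb} are legitimate. Everything else is bookkeeping: the three term-by-term identifications were essentially carried out in the paragraphs leading up to \eqref{lif_sy}, and Lemma~\ref{l_lift} supplies the passage from \eqref{0.0} to \eqref{aveq_i}.
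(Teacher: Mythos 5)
Your proposal follows essentially the same route as the paper: apply the It\^o formula \eqref{Ito} to $I_k=\tfrac12|\vv_k|^2$ along a solution of \eqref{lif_sy}, match the resulting drift and diffusion terms against \eqref{0.0} exactly as in the computations preceding \eqref{lif_sy} (including the identity $\|\Phi^k_{\theta'}B_{kl}\|_{HS}=\|B_{kl}\|_{HS}$ for the It\^o correction), and then pass from \eqref{0.0} to \eqref{aveq_i} via Lemma~\ref{l_lift}. This is correct and coincides with the paper's argument.
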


Here a {\it regular weak solution} is a weak solution, satisfying \eqref{bound}.
\smallskip

The drift $R^1(v)+R^2(v)$ in the effective equations \eqref{lif_sy} is an
averaging of the vector-field $P(v)=P^1(v)+P^2(v)$, see \eqref{eff}.

The kernel $\RR(k;l,\theta)(v)$ defines a linear operator $\RRR(v):=\Op(\RR(v))$
from the space  $L_2:=L_2(\Z^+\times\T^\infty)$ to the space $h:=h^{-1/2}$ (so the space $h$ is given the $l_2$-scalar product), see Section~1.5. The operator
 $\RRR(v)\RRR(v)^t:h\to h$ has the matrix $X(v)$, formed by $2\times2$-blocks
 $$
 X_{kj}(v)=\sum_l\int_{\T^\infty}\RR(k;l,\theta)(v)\,\RR(j;l,\theta)(v)\,d\theta.
 $$
 Due to \eqref{def_r} this is the matrix of the averaged diffusion operator \eqref{AvDiff}. If we write the diffusion term in the effective equations in the
 standard form, i.e. as
 $\sum_j\lan\lan B\ran\ran_{kj}(v)\,d\beta_j(\tau)$, where $\lan\lan B\ran\ran$ is
 a matrix of the operator $\RRR(v)$ with respect to some basis in $L_2$
 (see \eqref{symb}), then also $\lan\lan B\ran\ran (v)\lan\lan B\ran\ran^t(v)=X(v)$,
 see \eqref{Parc}. So the dispersion operator in \eqref{lif_sy} is a non-symmetric
 square root of the averaged diffusion operator in the $v$-equations. Cf. relation \eqref{effD} and its discussion. 
 \medskip

System \eqref{0.0} has locally Lipschitz coefficients
and does not have a singularity at $\p\,h^I_{p+}$, but its dispersion operator
 depends on $v$.
 Now we construct an equivalent
system of equations on $I$ which is $v$-independent, but has  weak singularities
 at  $\p\,h^I_{p+}$.

 The dispersion kernel in equation \eqref{0.0} is
$\vv_k^t\RR(k;l,\theta)(v)$. Let us re-denote it as $\KK_k(l,\theta)(v)$. Then
$\
\KK_k(l,\theta)(v)=\vv_k^tB_{kl}(v) \mid_{v:=\Phi_\theta v}
$.
Clearly
\begin{equation}\label{e00}
\KK_k(l,\theta)(\Phi_\phi v)= \KK_k(l,\theta+\phi)(v).
\end{equation}
Denoting, as before,  by $\Op(\KK(v))$ the linear
 operator $L_2(\N\times\T^\infty)\to l_2$ with the
kernel  $\KK(v)=\KK_k(l,\theta)(v)$, $v=(I,\vp)$, we have
\begin{equation}\label{e0}
\Op\big(\KK(I,\vp_1+\vp_2)\big)=\Op\big(\KK(I,\vp_1)\big)\circ U{(\vp_2)}.
\end{equation}
Here $U(\vp)$ is the unitary operator in $L_2(\N\times\T^\infty)$, corresponding to the
 rotation of $\T^\infty$ by an angle $\vp$.

Let us provide $L_2(\T^1, dx/2\pi)$ with the basis
 $\xi_j(\theta), \ j\in\Z$, where $\xi_0=1$,
$\xi_j=\sqrt2\,\cos jx$ if $j\ge1$ and $\xi_j=\sqrt2\,\sin jx$
 if $j\le-1$. For $i\in\Z$ and $s=(s_1,s_2,\dots)\in\Z^\N $, $|s|<\infty$, define
$$
E_{i,s}(l,\theta)=\delta_{l-i}\prod_{j\in\Z}\xi_{s_j}(\theta_j)
$$
(the infinite product is well defined since a.a. factors is 1).
These functions define a basis in $L_2(\N\times \T^\infty)$. Let
 $(E_r ,r\in\N)$, be the same functions, re-parameterised
by the natural parameter. For any $v=(I,\vp)$ the matrix $\KK(v)$ with the elements
$$
\KK_{k r}(v)=
\Big(\KK_k(l,\theta)(v), E_r(l,\theta)\Big)_{L_2}=
\int_{\Z^+\times\T^\infty}\KK_k(l,\theta)(v)\, E_r(l,\theta)(dl\times d\theta)
$$
is the matrix of the operator  $\Op(\KK(v))$ with
 respect to the basis $\{E_r\}$.

Due to \eqref{e0} for $v=(I,\vp)$ the operator
$\Op(\KK(I,\vp))$ equals $\Op(\KK(I,0))\circ U(\vp)$. So its matrix is
$$
\KK_{kr}(I,\vp)=\sum_mM_{k m}(I)U_{mr}(\vp),
$$
where the matrix $M_{km}(I)$  corresponds to the kernel
 $\KK_k(l,\theta)(I,0)$  and $U_{mr}(\vp)$ is the matrix of the operator $U(\vp)$
 (the matrices are formed by $2\times2$-blocks).
 Clearly $\|K(I,\vp)\|_{HS}=\|M(I)\|_{HS}$ for each $(I,\vp)$.
 Taking into account the form of the functions $E_{i,s}(l,\theta)$ we see that
 any  $U_{mr}(\vp)$  is a smooth function of each argument
 $\vp_j$ and is  independent from $\vp_k$ with $k$ large enough. In particular,
\begin{equation}\label{l.e1}
\begin{split}
 \text{any matrix element $U_{mr}(\vp)$  is a Lipschitz function of
 $\vp\in\T^\infty$.}
 \end{split}
 \end{equation}
 Note that the Lipschitz constant of $U_{mr}$ depends on $m$ and $r$.

Let us  denote the drift in the system \eqref{0.0} by $F_k(I)\,d\tau$  and write the
dispersion matrix with respect to the basis $\{E_r\}$. It  becomes
\begin{equation}\label{e2}
dI_k(\tau)=F_k(I)\,d\tau + \sum_{m,r}M_{km}(I)\,U_{mr}(\vp)\,d\beta_r.
\end{equation}
Let $\vp(\tau)\in\T^\infty$ be any progressively measurable process with
 continuous trajectories. Consider the processes $\tilde\beta_m(\tau)$, $m\ge1$,
\begin{equation}\label{relat}
d\tilde\beta_m(\tau)=\sum_r  U_{mr}(\vp(\tau))\,d\beta_r(\tau),\qquad
 \tilde\beta_m(0)=0.
\end{equation}
Since $U$ is an unitary operator, then $\tilde\beta_m(\tau), \ m\ge1$, are standard
 independent Wiener processes. So we may write \eqref{e2} as
\begin{equation}\label{e3}
    dI_k(\tau)=F_k(I)\,d\tau+\sum_m M_{km}(I)\,d\tilde\beta_m(\tau).
\end{equation}
Note that each weak solution of \eqref{e3} is a weak solution of \eqref{e2}
 and vice versa. Due to \eqref{ssqrt} the matrix $M$ satisfies \eqref{sqrt}.
  So equation \eqref{e3} has the same  weak solutions as equation \eqref{aveq_i}.

Now consider system
 \eqref{lif_sy} for $v(\tau)$. Denote by $\RR_{km}(v)$ the matrix, corresponding
 to the  kernel $\RR(k;l,\theta)(v)$ in the basis $\{E_k\}$. Denoting $R^1_k+R^2_k=R_k$
we write \eqref{lif_sy} as follows:
\begin{equation}\label{e44}
d\vv_k=R_k(v)\,d\tau+\sum_r\RR_{kr}(v)\,d\beta_r(\tau)\qquad\qquad
\end{equation}
$$
\qquad\qquad
=R_k(v)\,d\tau+\sum_{m,l,r}\RR_{kl}(v)U_{ml}(\vp)U_{mr}(\vp)\,d\beta_r(\tau).
$$
So
\begin{equation}\label{e4}
    d\vv_k=R_k(v)\,d\tau+\sum_m\tilde\RR_{km}(v)\,d\tilde\beta_m(\tau),
    \qquad k\ge1,
\end{equation}
where
$\
\tilde\RR_{km}(v)=\sum_l\RR_{kl}(v)U_{ml}(\vp).
$
As before, equations \eqref{lif_sy} and \eqref{e4} have the same sets of
 weak solutions.
 Since matrix elements $U_{mr}(\vp)$ smoothly depend on $\vp$,  we have
\begin{equation}\label{e5}
\begin{split}
&\|\tilde\RR(v)\|_{HS}=\|\RR(v)\|_{HS}<\infty\;\forall\, v \;
\\
 &\text{and every $\tilde\RR_{kl}(v)\ $}\text{\ smoothly depends on each
 $v_r\in\R^2\setminus\{0\}$.
}
\end{split}
\end{equation}

We have established
\begin{lemma}\label{l.equiv}
Equations \eqref{e4} have the same set of regular weak solutions as
 equations \eqref{e44}, and
equations \eqref{e3} -- as equations \eqref{aveq_i}.
 The Wiener processes $\{\beta_r(\tau),\, r\ge1\}$
and $\{\tilde\beta_m(\tau)$, $m\ge1\}$ are related by formula \eqref{relat}, where
 $v(\tau)=(I(\tau), \vp(\tau))$ and  the unitary matrix $U(\vp)$ satisfies \eqref{l.e1}.
\end{lemma}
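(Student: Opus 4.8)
The plan is to obtain all three equivalences from a single device: replacing the driving Wiener processes $\{\beta_r\}$ by the family $\{\tilde\beta_m\}$ of \eqref{relat}, which is a rotation of the noise by the unitary operator $U(\vp(\tau))$, combined with Yor's theorem on the irrelevance of the choice of square root of the diffusion. First I would record the linear-algebra identities that make the device work. Unitarity of $U(\vp)$ gives $\sum_r U_{mr}(\vp)U_{nr}(\vp)=\delta_{mn}$ and $\sum_m U_{ml}(\vp)U_{mr}(\vp)=\delta_{lr}$ for every $\vp\in\T^\infty$; the relation $\tilde\RR=\RR\,U^t$ then yields $\tilde\RR(v)\tilde\RR^t(v)=\RR(v)\RR^t(v)$ pointwise in $v$, and $K(I,\vp)=M(I)U(\vp)$ yields $KK^t=MM^t=S(I)$, the last equality being \eqref{ssqrt}.

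For \eqref{e44} versus \eqref{e4}: given a regular weak solution $v(\tau)$ of \eqref{e44} on a filtered probability space carrying independent standard Wiener processes $\{\beta_r\}$ to which $v$ is adapted, the integrands $U_{mr}(\vp(v(\tau)))$ in \eqref{relat} are bounded (by $1$) and progressively measurable --- by continuity of $\tau\mapsto\vp(v(\tau))$ and \eqref{l.e1} --- so each $\tilde\beta_m(\tau)=\int_0^\tau\sum_r U_{mr}(\vp)\,d\beta_r$ is a continuous local martingale with $\langle\tilde\beta_m,\tilde\beta_n\rangle_\tau=\int_0^\tau\sum_r U_{mr}U_{nr}\,ds=\delta_{mn}\tau$; by L\'evy's characterization the $\{\tilde\beta_m\}$ are independent standard Wiener processes with respect to the \emph{same} filtration, so no extension of the probability space is needed and adaptedness of $v$ is not lost. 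An interchange of the sums with the stochastic integral --- legitimate by the Hilbert--Schmidt bounds \eqref{e5} and $\sum_r U_{mr}^2=1$ --- gives
$$
\sum_m\tilde\RR_{km}(v)\,d\tilde\beta_m
=\sum_{m,l,r}\RR_{kl}(v)U_{ml}(\vp)U_{mr}(\vp)\,d\beta_r
=\sum_l\RR_{kl}(v)\,d\beta_l ,
$$
so $v(\tau)$ together with $\{\tilde\beta_m\}$ solves \eqref{e4}; the drift is unchanged and the bound \eqref{bound} is a condition on $v$ alone, hence preserved. Conversely, from a regular weak solution of \eqref{e4} one sets $d\beta_r=\sum_m U_{mr}(\vp(\tau))\,d\tilde\beta_m$, which since $U^{-1}=U^t$ is precisely the inversion of \eqref{relat}; the same argument makes $\{\beta_r\}$ independent standard Wiener processes and recovers \eqref{e44}. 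This proves the first equivalence with the processes related by \eqref{relat}.

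The equivalence of \eqref{aveq_i} and \eqref{e3} is the same construction applied to the action equations: \eqref{aveq_i} is rewritten as \eqref{0.0} and, in the basis $\{E_r\}$, as \eqref{e2} (both already done in Section~1.5), and \eqref{e2}$\,\leftrightarrow\,$\eqref{e3} is the rotation $K(I,\vp)=M(I)U(\vp)$ handled verbatim as above --- with one extra ingredient, that \eqref{aveq_i} as originally posed carries the symmetric square root $\sqrt S(I)$ while $M(I)$ is a non-symmetric one. Here I would invoke Yor's theorem \cite{Yor74}: two It\^o equations with the same drift and with dispersion matrices having the same product $\sigma\sigma^t=S(I)$ pose the same local martingale problem, hence share the same set of weak solutions, and regularity --- a condition on $I$, equivalently on $v$ --- is again preserved. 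I expect the one genuinely delicate point to be the application of L\'evy's characterization to an orthogonal, adapted, operator-valued integrand in infinitely many dimensions, together with the attendant Fubini interchange; the finiteness of $\|\RR\|_{HS}$ and $\|\tilde\RR\|_{HS}$ in \eqref{e5} and the normalization $\sum_r U_{mr}^2=1$ are precisely what is needed there, and everything else is routine bookkeeping.
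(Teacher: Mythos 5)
Your proposal is correct and follows essentially the same route as the paper: the paper's ``proof'' of Lemma~\ref{l.equiv} is precisely the preceding derivation, i.e.\ the noise rotation \eqref{relat} by the unitary $U(\vp(\tau))$ (with the $\tilde\beta_m$ recognized as standard independent Wiener processes, which is your L\'evy-characterization step), the identity $\sum_m U_{ml}U_{mr}=\delta_{lr}$ to pass between \eqref{e44} and \eqref{e4}, and the reference to \cite{Yor74} for the insensitivity of weak solutions to the choice of square root of $S(I)$. You have merely made explicit the adaptedness, Fubini, and Hilbert--Schmidt justifications that the paper leaves implicit.
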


We also note that if a process $v(\tau)$ satisfies only one equation
\eqref{e4}, then it also
satisfies the corresponding equation \eqref{e44}.

\section{Lifting of solutions}\label{s3}
\subsection{The theorem}
In this section we prove an assertion which in some sense is
 inverse to that of  Proposition~\ref{t_lif00}.
 For any $\vt\in\T^\infty$ and any vector $I\in h_I^p$ we set
\begin{equation*}
\begin{split}
V_\vt(I)=(\Vv_{\vt \,1},\Vv_{\vt\,2},\dots)\in h^p,\quad &\Vv_{\vt\,_r}=
\Vv_{\vt_r}(I_r),\quad \text{where}\\
&\Vv_\alpha(J)=
(\sqrt{2J}\cos\alpha,\sqrt{2J}\sin\alpha)^t\in \R^2\,.
\end{split}
\end{equation*}
Then $\vp_j(V_\vt(I))=\vt_j\  \forall\,j$ and for every $\vt$ the map
$I\mapsto V_\vt(I)$ is right-inverse to the map $v\mapsto I(v)$.
 For $N\ge1$ and any vector $I$ we  denote
$$
I^{>N}=(I_{N+1},I_{N+2},\dots),\qquad
V^{>N}_\vt(I)=(\Vv_{\vt\,N+1}(I),\Vv_{\vt\,N+2}(I),\dots).
$$

\begin{theorem}[Lifting]\label{l_lif}
Let $I^0(\tau)=(I^0_k(\tau)$, $k\ge1$, $0\le \tau\le T)$,  be a weak solution
of system (\ref{aveq_i}), constructed in Theorem~\ref{t_comp1}.
 Then, for any vector $\vt\in\T^\infty$,
 there is a regular weak solution $v(\tau)$ of system \eqref{lif_sy}  such that

i) the law of $I(v(\cdot))$ in the space $\cH_I$ (see \eqref{notat})
 coincides with that of $I^0(\cdot)$,

 ii)  $v(0)=V_{\vt}(I_0)$ a.s.
\end{theorem}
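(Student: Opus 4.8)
The plan is to build the lifted process $v(\tau)$ by a limiting procedure from the finite-dimensional Galerkin-type truncations of the effective system \eqref{lif_sy}, using the already-available solution $I^0(\tau)$ to supply the ``tail'' of the vector and the correct marginal law. More precisely, fix $\vt\in\T^\infty$. For each $N\ge1$ I would consider the system obtained from \eqref{e4} by keeping the first $N$ blocks $\vv_1,\dots,\vv_N$ as genuine unknowns, and forcing the remaining coordinates to follow the given solution: set $I_k(\tau)=I^0_k(\tau)$ for $k>N$, and for the angles $\vp_k$, $k>N$, take them frozen at $\vt_k$ (or, what turns out to be cleaner, take $\vv_k(\tau)=V^{>N}_\vt(I^0(\tau))$ for $k>N$, which is a legitimate progressively measurable process with continuous trajectories because $I^0$ has these properties by Theorem~\ref{t_comp1}). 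On the block $(\vv_1,\dots,\vv_N)$ one then solves the finite system
\begin{equation*}
d\vv_k=R_k(\vv_1,\dots,\vv_N, V^{>N}_\vt(I^0(\tau)))\,d\tau+\sum_m\tilde\RR_{km}(\dots)\,d\tilde\beta_m(\tau),\qquad 1\le k\le N,
\end{equation*}
with initial data $\vv_k(0)=\Vv_{\vt_k}(I_{0,k})$. By \eqref{e5} the coefficients are smooth (hence locally Lipschitz) in the genuine unknowns away from $\vv_k=0$, and the a priori moment bounds \eqref{xxx}, \eqref{est} together with \eqref{e120} (which controls the time spent near $I_k=0$, i.e. near the coordinate singularity) should give global-in-time existence of a regular solution $v^N(\tau)$ of this truncated problem, with moment bounds on $\sup_\tau|v^N(\tau)|_m$ uniform in $N$. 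A key point, verified exactly as in the derivation preceding Proposition~\ref{t_lif00}, is that $I(v^N(\tau))$ then satisfies the truncated version of \eqref{aveq_i}; in particular $J_k^N(\tau):=\tfrac12|\vv_k^N(\tau)|^2$ solves, for $k\le N$, an equation whose drift and diffusion agree with those of $I^0_k$ up to the replacement of the true components by the truncated ones.

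Next I would pass to the limit $N\to\infty$. The uniform moment estimates give tightness of the laws $\DD(v^N(\cdot))$ in $\cH_v=C([0,T],h^p)$ (for $p$ fixed, using compactness of the embedding $h^{p'}\hookrightarrow h^p$ for $p'>p$ coming from \eqref{bound}-type bounds, plus a Kolmogorov-type time-continuity estimate from the SDE). Along a subsequence $N_j\to\infty$, invoking the Skorokhod representation theorem, I would realize $v^{N_j}\to v$ a.s. in $\cH_v$ on a new probability space, together with the driving noises; the stochastic-integral convergence is standard once the coefficients are shown to converge (here one uses that $V^{>N}_\vt(I^0)$ converges to the tail of $v$ in $h^p$, which follows because by construction $I_k(v^{N}(\tau))=I^0_k(\tau)$ for $k>N$ and these tails are uniformly small by \eqref{xxx}). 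The limit $v(\tau)$ is then a regular weak solution of \eqref{lif_sy}: it satisfies \eqref{e4}, hence \eqref{e44}=\eqref{lif_sy}, and the regularity bound \eqref{bound} survives the limit by Fatou. It satisfies $v(0)=V_\vt(I_0)$ a.s., which is~ii).

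For part~i) I would argue as follows. On one hand, by Proposition~\ref{t_lif00} (applied to the limit $v$), the process $I(v(\cdot))$ is a regular weak solution of \eqref{aveq_i} with initial data $I_0$. On the other hand, the components with index $k>N_j$ of $v^{N_j}$ reproduce $I^0_k$ exactly; passing to the limit, $I(v(\tau))$ and $I^0(\tau)$ are two weak solutions of \eqref{aveq_i} that are ``linked'' through the same tail structure, and — more robustly — one checks directly that the truncated actions $J^{N_j}(\tau)=I(v^{N_j}(\tau))$ converge in law to $I^0(\tau)$, because each $J^{N_j}$ solves a system whose coefficients converge to those of \eqref{aveq_i} and whose first $N_j$-block initial data and noise match, while the tail is literally $I^0_k$; a martingale-problem identification then forces $\DD(I(v(\cdot)))=\DD(I^0(\cdot))$ in $\cH_I$. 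I expect the main obstacle to be precisely this last identification: \eqref{aveq_i} need not have a unique weak solution (that is the whole point of the paper), so one cannot simply say ``two solutions with the same initial law coincide.'' The resolution is that the truncations are set up so that the law of $J^{N_j}$ is determined at each stage by that of $I^0$ (the tail being copied verbatim and the head being a measurable functional of the noise and the tail), so the limiting law is $\DD(I^0)$ by construction rather than by a uniqueness theorem — making the careful bookkeeping of which randomness drives which block, and the verification that the truncated heads are progressively measurable with respect to the right filtration, the technical heart of the argument.
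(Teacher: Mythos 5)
Your skeleton (Galerkin truncation in $N$ with the tail supplied by $V^{>N}_\vt(I^0(\tau))$, tightness, limit $N\to\infty$, martingale-problem identification) matches the paper's Steps 2, 5 and 6, but the two points you yourself flag as delicate are exactly where the argument as written fails, and the paper's resolution of them is different from what you propose. First, global solvability of the truncated system: the coefficients $\tilde\RR_{km}$ (and the drift of the angle dynamics) are singular on the set where some $|\vv_k|$, $k\le N$, vanishes, and the bound \eqref{e120} only controls the \emph{expected occupation time} of a neighbourhood of that set by the limiting process $I^0$; it does not give well-posedness of the SDE there, so "should give global-in-time existence" is a gap, not a step. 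The paper spends all of Step 3 on this: it introduces stopping times $\theta_j^\pm$ at which $\min_{j\le N}I_j$ crosses the levels $\delta$ and $2\delta$, solves the equation only on the good intervals $\Lambda_j$, and on the bad intervals $\Delta_j$ replaces the dynamics by an auxiliary non-degenerate It\^o process (the construction of Lemma~4.3 of \cite{KP08}, reproduced in the Appendix, with the Krylov estimate \eqref{h6}) which preserves the actions and the law of $I$; only then does the limit $\delta\to0$, justified by \eqref{e120} and Lemma~\ref{l.lim}, remove the bad intervals. There is also a preliminary $P$-truncation (Step 1) to make coefficients bounded, and an $M$-truncation of the dispersion because the Lipschitz constants of $U_{mr}(\vp)$ are not uniform in $m$.

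Second, and more fundamentally, your route to assertion i) does not close. You propose to solve the $v^N$-equations directly and then argue that $I(v^N)$ must have the law of $I^0$ because "the head is a measurable functional of the noise and the tail" — but precisely because the coefficients are non-Lipschitz at the singular set, there is no strong uniqueness making the head such a functional, and a bare martingale-problem identification cannot work since \eqref{aveq_i} is not known to have a unique weak solution (as you note, that is the point of the paper). The paper sidesteps this by \emph{never solving for the actions at all}: it couples the full $I$-equation \eqref{e3} (solved by the prescribed process $I_P$, obtained as a limit of $I^\nu_P$) with the equation \eqref{e8} for the \emph{angles} $\vp_k=\arctan(v_k/v_{-k})$, $k\le N$, and then reconstructs $\vv_k=\Vv_{\vp_k}(I_k)$. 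With this parametrisation the identity $\tfrac12|\vv_k|^2\equiv I_k$ and hence $\DD(I(v^N(\cdot)))=\DD(I_P(\cdot))$ hold by construction, not by a uniqueness argument. Without this change of unknowns (or some equivalent coupling device) your proof of i) remains incomplete.
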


\begin{proof} {\bf Step 1.}  {\it Re-defining the equations for large amplitudes.}

For any $P\in\N$ consider the stopping time
$$
\tau_P=\inf\{\tau\in[0,T]\mid |v(\tau)|_p^2\equiv |I(v(\tau))|_{h^p_I}=P\}
$$
(here and in similar situations below $\tau_P=T$ if the set is empty).
For $\tau\ge\tau_P$ and each $\nu>0$ we re-define equations \eqref{kdv_bir} to the
trivial system
\begin{equation}\label{re_def}
    d\vv_k=
    b_k d\bb_k(\tau),\qquad k\ge1,
\end{equation}
and re-define accordingly equations \eqref{eq_for_i} and \eqref{aveq_i}. We will denote
the new equations as \eqref{kdv_bir}${}_P$, \eqref{eq_for_i}${}_P$ and
 \eqref{aveq_i}${}_P$. If $v_P^\nu(\tau)$ is a solution of \eqref{kdv_bir}${}_P$, then
$I_P^\nu(\tau)=I(v_P^\nu(\tau))$ satisfies \eqref{eq_for_i}${}_P$. That is, for
$\tau\le\tau_P$ it satisfies \eqref{eq_for_i}, while for $\tau\ge\tau_P$ it is
a solution of the It\^o equations
\begin{equation}\label{re_defI}
dI_k=\12b_k^2\,d\tau+b_k(v_k\,d\beta_k+v_{-k}\,d\beta_{-k})=
\12b_k^2\,d\tau+b_k\sqrt{2I_k}\,dw_k(\tau),\qquad k\ge1,
\end{equation}
where $w_k(\tau)$ is the Wiener process
$\int^\tau(\cos\vp_k\,d\beta_k+\sin\vp_k\,d\beta_{-k})$. So \eqref{eq_for_i}${}_P$
 is the system of equation
\begin{equation}\label{stop1}
dI_k=\chi_{\tau\le\tau_P}\cdot \lan {\rm r.h.s.\  of\  }\eqref{eq_for_i}\ran+
\chi_{\tau\ge\tau_P}\left(\12b_k^2\,d\tau +b_k\sqrt{2I_k}\,dw_k(\tau)\right),\quad k\ge1.
\end{equation}
Accordingly, the averaged system \eqref{aveq_i}${}_P$ may be written as
\begin{equation}\label{stop2}
dI_k=\chi_{\tau\le\tau_P}\Big( F_k(I)\,d\tau+\sum_jK_{kj}(I)\,d\beta_j(\tau)\Big)+
\chi_{\tau\ge\tau_P}\left(\12b_k^2\,d\tau +b_k\sqrt{2I_k}\,d\beta_k(\tau)\right),
\end{equation}
 $k\ge1$. Here (as in \eqref{e2}) $F_k\,d\tau$ abbreviates the drift in
 eq.~\eqref{aveq_i},  and for $\tau\ge\tau_P$ we replaced the Wiener process $w_k$ by
 the process $\beta_k$ -- this does not change  weak solutions  the system.

 Similar to $v^\nu$ and $I^\nu$ (see Lemma 4.1 in \cite{KP08}),
 the processes $v_P^\nu$ and $I_P^\nu$ meet the estimates
\begin{equation}\label{v_est}
    \E \sup_{0\le\tau\le T}|I(\tau)|^M_{h^m_I}=\E\sup_{0\le\tau\le T}|v(\tau)|_{h^m_I}^{2M}
    \le C(M,m,T),
\end{equation}
uniformly in $\nu\in(0,1]$.

Due to Theorem \ref{t_comp1} for a  sequence $\nu_j\to0$ we have
$
\DD(I^{\nu_j}(\cdot))\strela \DD(I^0(\cdot))
$.
Choosing a suitable subsequence we achieve that also
$
\DD(I_P^{\nu_j}(\cdot))\strela \DD(I_P(\cdot))
$
for some process $I_P(\tau)$, for each $P\in\N$. Clearly $I_P(\tau)$ satisfies
 estimates \eqref{v_est}.

\begin{lemma}\label{l.P}
For any $P\in\N$, $I_P(\tau)$ is a weak solution of \eqref{aveq_i}${}_P$ such
that $\DD(I_P)=\DD(I^0)$ for $\tau\le\tau_P$
 \footnote{That is, images of the two measures under the mapping
 $I(\tau)\mapsto I(\tau\wedge\tau_P)$ are equal. } and
$
\DD(I_P(\cdot))\strela \DD(I^0(\cdot))
$
as $P\to\infty$.
\end{lemma}
{\it Proof.}  The process $I_P^\nu(\tau)$ satisfies the system of It\^o
equations \eqref{eq_for_i}${}_P$=\eqref{stop1}  which we now abbreviate as
\begin{equation}\label{s1}
dI^\nu_{Pk}=\FF_k(\tau,v_P^\nu(\tau))\,d\tau+
\sum_j\Sig_{kj}(\tau,v^\nu_P(\tau))\,d\beta_j(\tau)\,,\quad k\ge1.
\end{equation}
Denote by $\lan \FF\ran_k(\tau,I)$ and $\lan\Sig\Sig^t\ran_{km}(\tau,I)$ the
 averaged drift and  diffusion. Then
$$
\lan \FF\ran_k=\chi_{\tau\le\tau_P}F_k(I)+\chi_{\tau\ge\tau_P}\12\,b_k,\quad
\lan\Sig\Sig^t\ran_{km}=
\chi_{\tau\le\tau_P}S_{km}(I)+\chi_{\tau\ge\tau_P}\delta_{km}b_k^22I_k
$$
(cf. \eqref{e2} and \eqref{diff}).
We claim that
\begin{equation}\label{s2}
\Upsilon^q_\nu:=\E\sup_{0\le\tau\le T}
\left| \int_0^\tau(\FF_k(s,v_P^\nu(s))-\lan\FF\ran_k(s,I^\nu_P(s))\,ds
\right|^q\to0\quad{\rm as} \quad\nu\to0,
\end{equation}
for $q=1$ and 4. Indeed, since $\FF_k=\lan\FF\ran_k$ for $\tau\ge\tau_P$ and
$v^\nu_P=v^\nu$, $I^\nu_P=I^\nu$ for  $\tau\le\tau_P$,  then
$$
\Upsilon^q_\nu\le \E\sup_{0\le\tau\le T}
\left| \int_0^\tau(\FF_k(s,v^\nu(s))-F_k(I^\nu(s))\,ds
\right|^q\,.
$$
But the r.h.s. goes to zero with $\nu$, see in \cite{KP08} Proposition~5.2 and relation
(6.17). So \eqref{s2} holds true.

Relations \eqref{s1} and \eqref{s2} with $q=1$ imply that for each $k$ the process
$\
Z_k(\tau)=I_k(\tau)-\int_0^\tau\lan\FF_k\ran\,ds
$,
regarded as the natural process on the space $\cH_I$, given the natural filtration
 and the measure $\DD(I_P)$,  is a square integrable martingale, cf. Proposition~6.3
 in \cite{KP08}. Using the same arguments and \eqref{s2} with $q=4$ we see that
 for any $k$ and $m$ the process $Z_k(\tau)Z_m(\tau)-\int_0^\tau\lan
 \Sig\Sig^t\ran_{km}\,ds$ also is a
$\DD(I_P)$-martingale. It means that the measure $\DD(I_P)$ is a solution
 of the martingale problem for eq.~\eqref{aveq_i}${}_P$=\eqref{stop2}. That
 is, $I_P(\tau)$ is a weak
solution of \eqref{aveq_i}${}_P$.
\smallskip

Since $\DD(I^\nu_P)=\DD(I^\nu)=:\IP^\nu$ for $\tau\le\tau_P$, then passing to the
limit as $\nu_j\to0$ we get the second assertion of the lemma. As
$\IP^\nu\{\tau_P< T\}\le CP^{-1}$ uniformly in $\nu$ (cf. \eqref{v_est}), then the last
assertion also follows.
\qed
\medskip

\noindent
{\bf Step 2.} {\it  Equation for $v^N$.}

By Lemma \ref{l.equiv} the process $I^0(\tau)$ satisfies \eqref{e3}.
For any $N\in\N$ we consider a
 Galerkin--like approximation for equations \eqref{e4}, coupled with eq.~\eqref{e3}.
 Namely, denote
$$
v^N(\tau)=(\vv_1,\dots,\vv_N)(\tau)\in\R^{2N},\quad
V^{>N}(\tau)=
V^{>N}_\vt(I(\tau))\,,
$$
and consider the following system
 of equations:
\begin{equation}\label{e7}
    \begin{split}
dI_k(\tau)=&F_k(I)\,d\tau+\sum_{m\ge1}M_{km}(I)\,d\tilde\beta_m(\tau),\quad k\ge1,\\
d\vv_k(\tau)=&R_k(v)\,d\tau+\sum_{m\ge1}\tilde\RR_{km}(v)
\,d\tilde\beta_m(\tau),
\quad k\le N,
    \end{split}
\end{equation}
where $v=(v^N, V^{>N}(I))$. We take $I(\tau)=I^0(\tau)$ for a solution of the
$I$-equations. Then  \eqref{e7} becomes equivalent
  to a system of $2N$ equations on $v^N(\tau)$
 with progressively measurable
 coefficients.

 As at Step~1 we re-define the $I$-equations in \eqref{e7} after $\tau_P$ to equations
 \eqref{re_defI} and the $v$-equations -- to \eqref{re_def}. We
  denote thus obtained system \eqref{e7}${}_P$. By Lemma~\ref{l.P} the
 process $I_P(\tau)$ satisfies the new $I$-equations, and we will take $I_P(\tau)$
 for the $I$-component of a solution for  \eqref{e7}${}_P$.
 To solve \eqref{e7}${}_P$ for $0\le\tau\le T$ we first solve
 \eqref{e7} till time $\tau_P$ and next solve the trivial system \eqref{re_def}
 for $\tau\in[\tau_P,T]$. The second step is obvious. So we will mostly  analyse
 the first step.
 The coefficients $R_k$ are Lipschitz in $v^N$ on bounded
subsets of $\R^{2n}$. Due to \eqref{e5} the  coefficients $\tilde\RR_{km}(v)$
are Lipschitz in $v^N$ if $|v|_p\le \sqrt{P}$ and
 $|\vv_j|\ >\delta$ $\forall\,j\le N$
 for some $\delta>0$, but the Lipschitz constants are not uniform in $m$.
 Denote
 $$
 \hat\Omega=\Omega_I\times\Omega_N=
 C(0,T;h^p_I)\times C(0,T;\R^{2N}),
 $$
 and denote by $\pi_I, \pi_N$ the natural projections $\pi_I:\hat\Omega\to\Omega_I$,
 $\pi_N:\hat\Omega\to\Omega_N$. Provide the Banach spaces $\hat\Omega, \Omega_I$
 and $\Omega_N$
 with the Borel sigma-algebras and the natural filtrations of sigma-algebras.

Our goal is  to construct a weak solution for \eqref{e7}${}_P$ such that its
 distribution
$\PP=\PP_P^N=\DD(I,v^N)$ satisfies
  $\pi_I\circ \PP=\DD(I_P(\cdot))$ and
 $I(v^N(\cdot))=I^N(\cdot)\ $ $\PP$-a.s.
After that we will go to a limit as $P\to\infty$ and $N\to\infty$ to get a required weak solution
$v$ of \eqref{lif_sy}.
\medskip

\noindent
{\bf Step 3.} {\it  Construction of a measure $\PP_\delta$.}

Let us denote
$
[I]=\min_{1\le j\le N}\{I_j\}
$.
Fix any positive $\delta$. For a process $I(\tau)$ we define
stopping times  $\theta^\pm_j\le T$ such that
 $\dots<\theta^-_j<\theta^+_j< \theta^-_{j+1}<\dots$  as follows:
\begin{itemize}
\item  if $[I(0)]\le\delta$, then $\thee=0$. Otherwise $\theta^+_0=0$.

\item If $\theta_j^-$ is defined, then $\theta_j^+$ is the first
  moment  after $\theta_j^-$ when $[I(\tau)]\ge 2\delta$ (if
  this never happens, then we set $\theta_j^+=T$; similar in the item below).

\item If $\theta_j^+$ is defined, then $\theta_{j+1}^-$ is the first
  moment  after $\theta_j^+$
when $[I(\tau)]\le \delta$.
\end{itemize}

We denote $\Delta_j=[\theta_j^-,\theta^+_j]$,
$\Lambda_j=[\theta_j^+,\theta^-_{j+1}]$  and set
$\Delta=\cup\Delta_j$, $\Lambda=\cup\Lambda_j$.

For segments $[0,\theta_j^-]$ and  $[0,\theta_j^+]$, which we denote below
 $[0,\theta_j^\pm]$, we will iteratively construct processes
 $(I,v^N)(\tau)=(I,v^N)^{j,\pm}(\tau)$ such that $\DD(I(\cdot))=\DD(I_P(\cdot))$,
  $v^N(\tau)=v^N(\tau\wedge\theta_j^\pm)$ and $\DD(I^N(\tau))=\DD(I(v^N(\tau))$
 for $\tau\le\theta_j^\pm$.  Moreover, on each segment $\Lambda_r\subset[0,\theta_j^\pm]$
the process  $(I,v^N)$  will be  a weak solution of \eqref{e7}${}_P$.
Next  we will obtain a desirable measure $\PP^N_P$ as a limit of the laws of
these processes  as  $j\to\infty$ and  $\delta\to0$.

For the sake of definiteness assume that $0=\theta_0^+$.
\smallskip

{\bf a)} $\tau\in\Lambda_0$. We will call the `$\delta$-stopped system  \eqref{e7}${}_P$' a system, obtained
 from  \eqref{e7}${}_P$ by multiplying the $v$-equations  by the  factor $\chi_{\tau\le\thee}$. We wish
 to construct a weak solution $(I,v^N)$ of this system such that, as before, $\DD(I)=\DD(I_P)$.
We will only show how to do this on the segment $[0, \thee\wedge\tau_P]$  since construction of
a solution for $\tau\ge\tau_P$ is trivial.

\begin{lemma}\label{l.e2}
For any positive $\delta$ and for $\vartheta$ as in Theorem~\ref{l_lif}
the  $\delta$-stopped system \eqref{e7}${}_P$ has a weak
 solution $(I,v^N)$  such that $\DD(I(\cdot))=\DD(I_P(\cdot))$ and
  $\12|\vv_k|^2(\tau)\equiv I_k(\tau)$, 
    $\vv_k(0)=\Vv_{\vt\,k}(I_0)$ for $k\le N$.
\end{lemma}
\begin{proof}
Let $(I,v^N)$ be a solution of \eqref{e7}${}_P$.
Application of the It\^o formula to $\varphi_k(v)=\arctan(v_k/v_{-k})$, $k\le N$,
 yields
\begin{equation}\label{e8}
d\varphi_k(\tau)=\chi_{\tau\le\thee}\left(
R^{atn}_k(v)\,d\tau+\sum\limits_{m\ge1}
\RR^{atn}_{km}(v)\,d\tilde\beta_{m}(\tau)\right),
\quad k\le N,
\end{equation}
where $v=(v^N,V^{>N})$ and
\begin{equation*}
\begin{split}
\RR^{atn}_{km} (v)=
\left(\nabla_{\vv_k}
\arctan\Big(\frac{v_k}{v_{-k}}\Big)\right)&\cdot\tilde{\cal R}_{km}(v),\\
R^{atn}_k(v)=
\left(\nabla_{\vv_k}
\arctan\Big(\frac{v_k}{v_{-k}}\Big)\right)&\cdot
R_k(v)\\
+\frac12\,&\sum\limits_{m\ge1}
\left(\nabla_{\vv_k}^2
\arctan\Big(\frac{v_k}{v_{-k}}\Big)\right)
\tilde{\cal R}_{km}\cdot\tilde{\cal R}_{km}.
\end{split}
\end{equation*}
Here $\cdot$ stands for the inner product in $\mathbb R^2$. In the r.h.s. of
\eqref{e8}, for $k=1,\dots,N$
we express $\vv_k(\tau)$ via $\vp_k(\tau)$ and $I_k(\tau)$ as
$\vv_k=\Vv_{\vp_k}(I_k)$.
Then
 $\chi_{\tau\le\thee} R_k^{atn}$ and $\chi_{\tau\le\thee}\RR_{km}^{atn}$
 become smooth functions
of $I$ and $\vp^M$. Accordingly,  \eqref{e3}${}_P$+ \eqref{e8}${}_P$ is a system
of equations for $(I,\vp^N)$ and the pair  $(I,\vp^N)$ as above is its solution.

Other way round, if a pair $(I,\vp^N)$ satisfies system
 \eqref{e3}${}_P$+ \eqref{e8}${}_P$,
then  $(I,v^N)$ is a solution of the $\delta$-stopped equations
\eqref{e7}${}_P$ such that
$\12|\vv_k|^2=I_k$ for $k\le N$.
Indeed, we recover the $v^N$-component of a solution $(I,v^N)$
as $\vv_j(\tau)=\Vv_{\vp_j(\tau)}(I_j(\tau))$, $j\le N$.

For any $M\ge1$ we call the `$M$-truncation of system \eqref{e8}' a  system,
obtained from \eqref{e8} by removing  the terms
$\RR^{atn}_{km}\,d\tilde\beta_m$ with $m>M$. The $M$-truncated and $\delta$-stopped
 system \eqref{e8} with $I=I_P$ is an
equation with progressively measurable coefficients,  Lipschitz continuous
 in $\vp^N$ (see \eqref{e5}).  So it has a unique strong solution $\vp^{N,M}$.
  Since
 $$
 \|\RR^{atn}(v)\|_{HS}\le C \,\|\tilde\RR(v)\|_{HS}= C\,\|\RR(v)\|_{HS},
 $$
 then all moments of the random variable $\sup_\tau\|\RR^{atn}(v(\tau))\|_{HS}$ are
 finite. Accordingly, the family of processes $(I_P,\vp^{N,M})\in h^I_P\times\T^N$,
 $M\ge1$,  is tight. Any limiting as $M\to\infty$
 measure  solves the martingale problem, corresponding to the $\delta$-stopped system
  \eqref{e3}${}_P$+\eqref{e8}${}_P$. So this is a law of a weak solution  $(I_P,\vp^N)$ of  that system
 (i.e., $(I_P,\vp^N)(\tau)$ satisfies the system with suitably chosen Wiener
 processes $\tilde\beta_m$). Accordingly,  we have
 constructed a desirable weak solution $(I,v^N)(\tau)$.
\end{proof}

We denote by $\PP^-_1$ the law of the constructed solution $(I,v^N)$. This is a measure
in $\hat\Omega$, supported by trajectories $(I,v^N)$ such that $v^N(\tau)$ is stopped at
$\tau=\thee$.
\smallskip

{\bf b)} Now we will extend  $\PP^-_1$ to a measure $\PP_1^+$ on $\hat\Omega$, supported by
trajectories $(I,v^N)$, where $v^N$ is stopped  at time $\theta_1^+$.

 Let us denote by  $\Theta=\Theta^{\theta_1^-}$  the operator which stops any
 continuous trajectory $\eta(\tau)$ at time
  $\tau=\thee$. That is, replaces it by $\eta(\tau\wedge\thee)$.

  Since
   $\DD(I^\nu_P(\cdot))\strela \DD(I_P(\cdot))$ as $\nu=\nu_j\to0$, then we can
   represent the laws
  $\PP^-_1$ and $\DD(v_P^\nu))$ by distributions of processes
  $(I'_P(\tau),{v'_P}^N(\tau))$ and ${v'_P}^\nu(\tau)$ such that
  $$
  I(\vb(\cdot)) \to I'_P(\cdot)\quad\text{as $\nu=\nu_j\to0$\;\; in}
  \quad \cH_I \quad\text{a.s. },
  $$
   and
  $$
  I(\va)\equiv {I'_P}^N\quad\text{for}\quad \tau\le\thee.
  $$
  Since $\vb(\tau,\omega)$, $0\le\tau\le T$,  is a diffusion process, we
 may replace it by a continuous process $\vc(\tau;\omega,\omega_1)$ on an extended
 probability space $\Omega\times\Omega_1$
  such that
\begin{enumerate}
\item
 $\
  \DD\,\vc=\DD\,\vb;
  $
\item  for $\tau\le\thee=\thee(\omega)$ we have $\vc=\vb$ (in particular, then $\vc$ is
independent from $\omega_1$);
\item  for $\tau\ge\thee$  the process  $\vc$
depends on $\omega$ only through the initial data
 $\vc(\thee,\omega,\omega_1) = \vb(\thee,\omega)$. For a fixed $\omega$ it
 satisfies \eqref{kdv_bir}${}_P$ with suitable Wiener processes
$\beta_j$'s, defined on the space $\Omega_1$.
\end{enumerate}

 Using a construction from \cite{KP08},
  presented in Appendix, for each
 $\omega$  we construct a continuous process ${(\bar w^\nu,\vcc)}(\tau;\omega,\omega_1)
\in h^p\times\R^{2N}$, $\tau\ge\thee$, $\omega_1\in\Omega_1$,  such that for
each $\omega$ we have
\smallskip

(i) law of the process $\bar w^\nu(\tau;\omega,\omega_1)$, $\tau\ge\thee$,
 $\omega_1\in\Omega_1$, is the same as of the process $w^\nu_P(\tau;\omega,\omega_1)$;
\smallskip

(ii)
$I(\vcc)=I^N(\bar w^\nu)$ for $\tau\ge\thee$ and
 $\vp \big({\vcc}(\thee)\big)=\vp(\va(\thee))$  a.s. in $\Omega_1$;
\smallskip

(iii) the law of the process ${\vcc}(\tau)$, $\tau\ge\thee$, is that of an
 It\^o process \begin{equation}\label{f1}
    dv^N=B^N(\tau)\,d\tau+a^N(\tau)\,dw(\tau),
\end{equation}
where for every $\tau$ the vector $B^N(\tau)$ and the matrix $a^N(\tau)$ satisfy
\begin{equation}\label{f2}
    |B^N(\tau)|\le C,\qquad C^{-1} I\le a^N (a^N)^t
(\tau)\le CI\quad \text{a.s},
\end{equation}
with some  $C=C(P,M)$.
\smallskip

Next for $\nu=\nu_j$ consider the process
$$
\xi^\nu_P(\tau)=\left(I_P^\nu(\tau)=I(\bar w^\nu(\tau)),\, \chi_{\tau\le\thee}\va
+\chi_{\tau>\thee}{\vcc}\right),\quad 0\le\tau\le T.
$$
Due to \eqref{v_est} and (iii) the family of laws $\{\DD(\xi_P^{\nu_j}), j\ge1\}$,
is  tight in the space
$C(0,T;h^I_p\times\R^{2N})$. Consider any limiting measure $\Pi$ (corresponding to
a suitable subsequence
$\nu'_j\to0$) and represent it by a process
$\tilde\xi_P(\tau)=(\tilde I_P(\tau), \tilde v^N_P(\tau))$, i.e.  $\DD\tilde\xi_P=\Pi$.
 Clearly,

(iv) $\DD(\tilde\xi_P)\mid_{\tau\le\thee}=\PP^-_1$,

(v) $\DD(\tilde I_P)=\DD(I_P)$.

Since any measure $\DD(\xi^\nu_P)$ is supported by the closed set, formed by all
trajectories $(I(\tau),v^N(\tau))$ satisfying $I^N\equiv I(v^N)$, then the
limiting measure $\Pi$ also is supported by it. So the process $\tilde\xi_P$
satisfies

(vi) $I(\tilde v_P^N(\tau))\equiv \tilde I^N_P(\tau)$   a.s.

Moreover, for the same reasons as in Appendix  the law of the
limiting process $\tilde v^N_P(\tau)$, $\tau\ge\thee$, is that of an
It\^o process \eqref{f1}, \eqref{f2}. (Note that  for $\tau\ge\thee$ the  process
$\tilde v^N_P$ {\it is not} a solution  of \eqref{e7}).

Now we set
$$
\PP^+_1=\Theta^{\theta_1^+}\circ \DD(\tilde\xi_P).
$$

\smallskip

{\bf c)} The constructed measure $\PP_1^+$ gives us distribution of a process
$(I(\tau), v^N(\tau))$ for ${\tau\le\theta_1^+}$.
 Next we solve eq. \eqref{e7}${}_P$ on the interval
$\Lambda_1=[\theta_1^+,\theta_2^-]$ with the initial data
$(I(\theta_2^-), v^N(\theta_2^-)$ and iterate the construction.

It is easy to see that a.s. the sequence $\theta_j^\pm$ stabilises at $\tau=T$
 after a finite
(random) number of steps. Accordingly the sequence of measures $\PP_j^\pm$
 converges to a limiting measure $\PP_\delta$ on $\hat\Omega$.
\smallskip

{\bf d)} On the space $\tilde\Omega$, given the measure $\PP_\delta$, consider
 the natural process
which we denote $\xi_\delta(\tau)=(I_\delta(\tau), v^N_\delta(\tau))$. We have
\begin{enumerate}
\item  $\DD(I_\delta(\cdot))=\DD(I_P)$,

\item $I(v^N_\delta(\cdot))\equiv I^N_\delta$  a.s.,

\item for $\tau\in\Lambda$ the process $\xi_\delta$ is a weak solution of
 \eqref{e7}${}_P$, while for $\tau\in\Delta$ the process $v^N_\delta(\tau)$
 is distributed as an It\^o process
\eqref{f1}.
\end{enumerate}
\medskip

\noindent
{\bf Step 4.} {\it Limit $\delta\to0$}.

Due to 1-3 the set of measures $\{\PP_\delta, 0<\delta\le1\}$ is tight.
 Let $\PP_P$ be any limiting measure as $\delta\to0$. Clearly it meets 1 and 2
 above.

\begin{lemma}\label{l.lim}
The measure $\PP_P$ is a solution of the martingale problem for equation
 \eqref{e7}${}_P$.
\end{lemma}
The lemma is proved in the next subsection.

\medskip

\noindent
{\bf Step 5.} {\it Limit $P\to\infty$}.

Due to 1, 2 above, relations
\eqref{v_est} and Lemma~\ref{l.lim} the set of measures $\PP_P$,
$P\to\infty$, is tight. Consider any liming measure $\PP^N$ for this family.
 Repeating in
a simpler way the proof of Lemma~\ref{l.lim} we find that $\PP^N$ solves the martingale
problem \eqref{e7}. It still satisfies 1 and~2 (see Step~3d)\,). Let
 $(I(\tau), v^N(\tau))$
be a weak solution for \eqref{e7} such that its law equals $\PP^N$. Denote
by $\vN(\tau)$ the
process $(v^N(\tau), V^{>N}(\tau))$ and denote by $\mu^N$ its law in the space $\cH_v$
(see \eqref{notat}).

\medskip

\noindent
{\bf Step 6.} {\it Limit $N\to\infty$}.

Due to  \eqref{est} the family of measures $\{\mu^N\}$ is tight in $\cH_v$.
 Let $N_j\to\infty$ be a sequence such that $\mu^{N_j}\strela\mu$.

 The process $\vN(\tau)$ satisfies equations \eqref{e4}${}_{1\le k\le N}$ with suitable
standard  independent Wiener processes $\tilde\beta_m(\tau)$. Due to
 Lemma~\ref{l.equiv}
 and a remark, made after it, the process also satisfies
 equations \eqref{e44}${}_{1\le k\le N}$.
 Repeating again the proof of Lemma~\ref{l.lim} we see that $\mu$ is a martingale
 solution of the system \eqref{e44}${}_{1\le k\le N}$ for any $N\ge1$. Hence, $\mu$ is a
 martingale solution of \eqref{e44} and of \eqref{lif_sy}.
  Let $v(\tau)$ be a
 corresponding weak solution of \eqref{e44}, $\DD(v(\cdot))=\mu$. As
 $\mu^{N_j}\strela\mu$, then the process $v$ satisfies assertions  $i)$
 and $ii)$ in Theorem~\ref{l_lif} and the theorem is proved.
\end{proof}

\subsection{Proof of Lemma \ref{l.lim}.}

Consider the space $\hat\Omega$ with the natural filtration $\FF_\tau$, provide
 it with a measure
$\PP_\delta$ and, as usual, complete the sigma-algebras  $\FF_\tau$ with respect
 to this measure.
As before we denote by $\xi_\delta(\tau)=(I_\delta(\tau), v^N_\delta(\tau),
 0\le\tau\le T)$, the
natural process on $\hat\Omega$.

i) For $k\ge1$  consider the process $I_{\delta k}(\tau)$.
It satisfies the $I_k$-equation in
 \eqref{e7}${}_P$:
 \begin{equation}\label{g1}
    dI_k=F_k^P(\tau,I)\,d\tau + \sum M^P_{km}(\tau,I)\,d\tilde\beta_m(\tau).
 \end{equation}
 Here $F_k^P$ equals  $F_k$ for $\tau\le\tau_P$ and equals
 $\tfrac{1}{2}b_k^2$ $\tau>\tau_P$, while
 $M^P_{km}$  equals
 $M_{km}$   for  $\tau\le\tau_P$ and equals $b_k\sqrt{2I_k}$ for
  $\tau>\tau_P$, cf.~\eqref{stop2}. For each $\delta>0$ and
   any $k$ the process
 $ \chi^I_k(\tau)=I_k(\tau)-\int_0^\tau F_k^P(s,I(s))\,ds$
  is an  $\PP_\delta$-martingale. Due to \eqref{est} the $L_2$-norm of these
 martingales are bounded uniformly in $\tau$ and $\delta$. Since
 $\PP_\delta\strela\PP_P$ and the laws of the processes $\chi_k^I$, corresponding
to $\delta\in(0,1]$ are tight in $C[0,T]$, then
   $\chi^I_k(\tau)$ also is an $\PP_P$-martingale.
  \smallskip

  ii) Consider a process $\vv_{\delta k}$, $1\le k\le N$. It satisfies
 \eqref{e7}${}_P$ for
  $\tau\in\Lambda$ and satisfies the $k$-th equation in \eqref{f1}
 for $\tau\in\Delta$, where the
  vector $B^N(\tau)$ and the operator $a^N(\tau)$, $\tau\in\Delta$, meet
 the estimates \eqref{f2}.
  So $\vv_{\delta k}$ satisfies the It\^o equation
  \begin{equation}\label{g2}
    \begin{split}
    d\vv_k(\tau)=&\left(\chi_{\tau\in\Lambda}R_k^P(\tau,v)+\chi_{\tau\in\Delta}
    B_k^N(\tau)\right)d\tau  \\
    +&\chi_{\tau\in\Lambda}\sum_m\tilde\RR^P_{km}(\tau,v)\,d\tilde\beta_m(\tau)
    +\chi_{\tau\in\Delta}\sum_r a^N_{kr}(\tau)\,dw_r(\tau)\\
    =:&\,A^\delta_k(\tau)\,d\tau + \sum_{m\ge1}G^\delta_{km}(\tau,v)\,d\tilde\beta_m(\tau)
    +\sum_{r=1}^{2N} C^\delta_{kr}(\tau)\,dw_r(\tau).
    \end{split}
  \end{equation}
  Note that the random dispersion matrices $G^\delta(\tau)$ and $C^\delta(\tau)$ are
  supported  by non-inter\-sec\-ting  random   time-sets.

For any $\delta>0$ the process
 $ \chi^\delta_k(\tau)=\vv_k(\tau)-\int_0^\tau A_k^\delta(s)\,ds\in\R^2$
is an $\PP_\delta$-martingale. Let us compare $\int A_k^\delta\,ds$ with the
corresponding term in \eqref{e7}${}_P$. For this end we consider the quantity
\begin{equation}\label{g3}
    \begin{split}
    &\E \sup_{0\le\tau\le T}\left|\int_0^\tau A_k^\delta(s)\,ds -\int_0^\tau
 R_k^P(s,v(s))\,ds
    \right|\\
    &\le \E\int_\Delta\left|R^P_k(s,v(s)) \right|\,ds+\E\int_\Delta|B_k^N(s)|\,ds
    =:\Upsilon_1+\Upsilon_2.
    \end{split}
\end{equation}
By \eqref{v_est} and\eqref{e120},
$$
\Upsilon_1^2\le\E\int_0^T|R_k^P|^2\,ds \cdot \E\int_0^T\chi_\Delta(s)\,ds\le
 C(P)\,o_\delta(1).
$$
Similar $\Upsilon_2\le C(P)\,o_\delta(1)$. So \eqref{g3} goes to zero with
  $\delta$. Since the
$L_2$-norms of the martingales $ \chi^\delta_k$ are uniformly bounded and their laws
are tight in $C(0,T;\R^2)$,  then
 $ \chi^0_k(\tau)=\vv_k(\tau)-\int_0^\tau R_k^P(s)\,ds$
is an $\PP_P$-martingale.
Indeed, let us take any $0\le\tau_1\le\tau_2\le T$ and let $\Phi\in C_b(\hat\Omega)$
be any
 function such that $\Phi(\xi(\cdot))$ depends only on $\xi(\tau)_{\,0\le\tau\le\tau_1}$.
 We
 have to show that
 \begin{equation}\label{h00}
    \E^{\PP_P}\left((\chi^0_k(\tau_2)-\chi^0_k(\tau_1))\Phi(\xi)\right)=0.
 \end{equation}
 The l.h.s. equals
 \begin{equation*}
    \begin{split}
 &\lim_{\delta\to0}\E^{\PP_\delta}\left((\chi^0_k(\tau_2)-
 \chi^0_k(\tau_1))\Phi(\xi)\right)\\
 &=\lim_{\delta\to0}\E^{\PP_\delta}\left(\Phi(\xi)\Big(\vv_k(\tau_2)-
 \vv_k(\tau_1)-\int_{\tau_1}^{\tau_2}R_k^P(s)\,ds\Big)\right)\\
 &=\lim_{\delta\to0}\E^{\PP_\delta}\left(\Phi(\xi)\int_{\tau_1}^{\tau_2}
 \big(A_k^\delta(s) -R_k^P(s)\big)ds\right)
    \end{split}
 \end{equation*}
(we use that  $\chi^\delta_k$ is a $\PP_\delta$-martingale).  The r.h.s. is
 $$
 \le C \lim_{\delta\to0}\E^{\PP_\delta}\sup_\tau\left|\int_0^\tau(A_k^\delta(s)-R_k^P(s))\,ds
 \right|\le C\lim_{\delta\to0}(\Upsilon_1-\Upsilon_2)=0.
 $$
So \eqref{h00} is established.
\smallskip

iii) For the same reasons as in i), for each $k$ and $l$ the process
$$
\chi^I_k(\tau)\chi_l^I(\tau)-\frac12 \int_0^\tau\sum_mM^P_{km}(s,I(s))M^P_{lm}(s,I(s))\,ds
$$
is an $\PP_P$-martingale.
\smallskip

iv) Due to \eqref{g2}, for any $\delta$ and any $k,l\le N$ the process
\begin{equation*}
\begin{split}
\chi^\delta_k(\tau)\chi_l^\delta(\tau)&-\frac12
 \int_0^\tau\left(\sum_mG^\delta_{km}G^\delta_{lm}+
C^\delta_{km}C^\delta_{lm}\right)\,ds\\
&=:\chi^\delta_k(\tau)\chi_l^\delta(\tau)-\frac12\int_0^\tau\left(X_{kl}(s)+Y_{kl}(s)
\right)ds
\end{split}
\end{equation*}
is a $\PP_\delta$-martingale. We compare it with the corresponding expression
for eq. \eqref{e7}${}_P$.
To do this we first consider the expression
\begin{equation}
\begin{split}
&\E\sup_{0\le\tau\le T}\left|\frac12\int_0^\tau\left(\sum_m\tilde\RR^P_{km}
\tilde\RR^P_{lm}
-X_{kl}-Y_{kl}  \right)ds
\right|\\
&\le\E\,\frac12\int_0^\T \left|\sum_m\tilde\RR^P_{km}
\tilde\RR^P_{lm}\right|\chi_{s\in\Delta}\,ds
+\E\,\frac12\int_0^T \left|\sum_ma^N_{km}a^N_{lm}\right|\chi_{s\in\Delta}\,ds.
\end{split}
\end{equation}
As in ii), the r.h.s. goes to zero with $\delta$. Hence,
$\
\chi^0_k(\tau)\chi_l^0(\tau)-\frac12 \int_0^\tau\tilde\RR^P_{km}\tilde\RR^P_{lm}\,ds
$
is an $\PP_P$-martingale by the same arguments that prove \eqref{h00}.
\smallskip

v) Finally consider the $I,v$-correlation. For $k\ge1$ and $1\le l\le N$ the process
\begin{equation*}
    \begin{split}
\R^2\ni\chi^I_k(\tau)\chi_l^\delta(\tau)-\frac12 \int_0^\tau
\sum_mM^P_{km}G^\delta_{lm}\,ds-&
\frac12 \int_0^\tau\sum_{m\ge1}\sum_{r=1}^{2N}
 M^P_{km}C^\delta_{lr}\,d[\tilde\beta_m,w_r](s)\\
=:&\chi^I_k(\tau)\chi_l^\delta(\tau)-\frac12 \int_0^\tau\Xi_{kl}^\delta(s)\,ds
    \end{split}
\end{equation*}
is an $\PP_\delta$ martingale. We know that
\begin{enumerate}
\item  the matrix $\frac{d}{ds}[\tilde\beta_m,w_r](s)$ is constant in $s$ and
is such that $l_2$-norms of all its columns and rows are bounded by one;
\item $\|M^P\|_{HS},\ \|C^\delta\|_{HS}\le C(P)$ for all $\delta$.
\end{enumerate}
Therefore
$$
\left|
\sum_{m\ge1}\sum_{r=1}^{2N}  M^P_{km}C^\delta_{lr}\, \frac{d}{ds}
 [\tilde\beta_m,w_r](s)\right|
\le C_1(P)\,.
$$
Now repeating once again the arguments in ii) we find that
$$
\E\sup_{0\le\tau\le T}\frac12 \left|\int_0^\tau
\left(\sum_mM^P_{km} \tilde\RR^P_{lm}-\Xi_{kl}^\delta\right)ds
\right|\to0
$$
as $\delta\to0$. Therefore the process
$\
\chi^I_k(\tau)\chi_l^\delta(\tau)-\frac12 \int_0^\tau\sum_mM^P_{km} \tilde\RR^P_{lm}\,ds\
$
 is an $\PP_P$-martingale.
 \smallskip

 Due to i)-v) the measure $\PP_P$ is a martingale solution for eq.~\eqref{e7}${}_P$.\qed

\section{Uniqueness of solution}\label{s_uniq}

In this section we will show that a regular solution of the effective equation
(\ref{lif_sy}) (i.e. a solution that satisfies
estimates (\ref{est}))  is unique. Namely, we will prove the following result:

\begin{theorem}\label{t_unistr}
If $v^1(\tau)$ and $v^2(\tau)$ are strong regular solutions of (\ref{lif_sy}
) with\\ $v^1 (0)=v^2(0)$ a.s., then $v^1(\cdot)=v^2(\cdot)$ a.s.
\end{theorem}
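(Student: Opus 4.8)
The plan is to use the structure, established in this section, that the effective system \eqref{lif_sy}, written in the basis $\{E_r\}$ as \eqref{e44}, is a stochastic heat equation in the Fourier coefficients $v=(\vv_1,\vv_2,\dots)$: it has a constant-coefficient diagonal Laplacian as its principal drift and a one-smoother (hence subordinate) nonlinearity, plus a dispersion that is an analytic Hilbert--Schmidt-valued map. Pathwise uniqueness then follows from a weighted energy estimate in $h^p$ and Gronwall's lemma, with the parabolic term absorbing the error coming from the nonlinear drift. First I would isolate the principal part of the drift. Writing $u=\Psi^{-1}(v)$ and using $P^1=d\Psi(u)u_{xx}$ together with the quasilinearity of $\Psi$ (Proposition~\ref{amplif_1_1}) and $d\Psi(0):u_s\mapsto|s|^{-1/2}u_s$ (item~2 of Theorem~\ref{t_kp08}), one gets $P^1_k(v)=-k^2\vv_k+G^1_k(v)$, where $v\mapsto G^1(v)$ loses strictly fewer than two derivatives, say maps $h^m$ into $h^{m-1}$ analytically, and the It\^o drift $P^2$ is even smoother. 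Since the rotations $\Phi_\theta$ commute with $\vv_k\mapsto-k^2\vv_k$, the averaging \eqref{eff} leaves this term untouched, so \eqref{e44} takes the form
\[
 d\vv_k=\big(-k^2\vv_k+G_k(v)\big)\,d\tau+\sum_r\RR_{kr}(v)\,d\beta_r(\tau),\qquad k\ge1,
\]
with $G=\lan G^1\ran+R^2$ analytic and one order below the Laplacian, and with $v\mapsto\RR(v)$ analytic (this is exactly where the choice of the analytic, rather than symmetric, square root matters) and Hilbert--Schmidt from $L_2$ into $h^p$; all three maps are Lipschitz on bounded subsets of $h^p$.

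Next I would prove pathwise uniqueness. Put $w=v^1-v^2$ and, for $R>0$, set $\tau_R=\inf\{\tau\le T:\ |v^1(\tau)|_p\vee|v^2(\tau)|_p\ge R\}$ (with $\tau_R=T$ if this never occurs). Since both solutions are strong with respect to the same Wiener processes $\{\beta_r\}$, subtracting the two copies of the displayed equation and applying the It\^o formula to $\tau\mapsto|w(\tau\wedge\tau_R)|_p^2$ yields, for $\tau\le\tau_R$,
\[
 d|w|_p^2=\Big(-2|w|_{p+1}^2+2\lan w,G(v^1)-G(v^2)\ran_p+\big\|\RR(v^1)-\RR(v^2)\big\|_{HS(L_2,h^p)}^2\Big)\,d\tau+dM(\tau),
\]
where the first term comes from $\sum_kk^{1+2p}\vv_k\cdot(-k^2\vv_k)$ (here $\lan\cdot,\cdot\ran_p$ is the inner product of $h^p$), and $M$ is a local martingale which, stopped at $\tau_R$, is a true martingale because its integrand is bounded by a deterministic constant $C(R)$. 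By Cauchy--Schwarz with the $h^p$-weights, $2\lan w,G(v^1)-G(v^2)\ran_p\le|w|_{p+1}|G(v^1)-G(v^2)|_{p-1}\le|w|_{p+1}^2+C(R)|w|_p^2$, and the first summand is absorbed by $-2|w|_{p+1}^2$; the dispersion term is $\le C(R)|w|_p^2$ by local Lipschitz continuity of $\RR$. Taking expectations and discarding $-|w|_{p+1}^2$ gives
\[
 \E\,|w(\tau\wedge\tau_R)|_p^2\le C(R)\int_0^\tau\E\,|w(s\wedge\tau_R)|_p^2\,ds ,
\]
so Gronwall's lemma and $w(0)=0$ force $w(\cdot\wedge\tau_R)\equiv0$ a.s. Finally, since $v^1,v^2$ are regular, the moment bounds \eqref{est}--\eqref{bound} give $\tau_R\to T$ a.s. as $R\to\infty$, whence $v^1\equiv v^2$ on $[0,T]$ a.s.

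The main obstacle is the first step. Extracting the diagonal Laplacian from the drift with an explicitly controlled, Lipschitz remainder one order below it relies on the full quasilinearity of the nonlinear Fourier transform, Proposition~\ref{amplif_1_1}, which is established here only locally or in special cases, its proof in general being based on the spectral theory of Hill's operators and deferred. One must also verify that the analytic non-symmetric square root $\lan\lan B\ran\ran$ constructed in Section~1.5 is genuinely locally Lipschitz as a Hilbert--Schmidt-valued map on the weighted spaces, uniformly up to the boundary $\p h^p_{I+}$ where some $I_j$ vanish --- this is precisely the regularity that the symmetric square root lacks and without which the energy estimate, and hence the whole uniqueness argument, would break down. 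Once these two structural facts are available, the energy estimate and the Gronwall argument above are routine.
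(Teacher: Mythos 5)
Your proposal is correct and follows essentially the same route as the paper: splitting the averaged drift as the diagonal Laplacian $\hD v$ plus a remainder that is analytic (hence locally Lipschitz) from $h^m$ to $h^{m-1}$ via the quasilinearity of $\Psi$ (Proposition~\ref{amplif_1_1}), stopping at a time where the norms are bounded, and running an It\^o/energy estimate in which the parabolic term absorbs the one-derivative-lower drift difference, followed by Gronwall. The paper carries out the estimate for $|w|_0^2$ with the stopping time defined by the $h^2$-norm rather than for $|w|_p^2$, but this is only a cosmetic difference, and you correctly identify the same two structural inputs (quasilinearity of $\Psi$ and analyticity of the non-symmetric dispersion) on which the paper's Lemma~\ref{l4.1} and its $\Xi_3$-estimate rest.
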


Using the  Yamada-Watanabe arguments (see, for instance, \cite{KaSh}), we conclude
that  uniqueness of a strong regular solution for (\ref{lif_sy}) implies
uniqueness of a  regular weak  solution.  So we get
\begin{corollary}
If $v^1$ and $v^2$ are  regular weak  solutions of equation  (\ref{lif_sy}) such
 that $\DD(v^1(0))=\DD(v^2(0))$, then $\DD(v^1(\cdot))=\DD(v^2(\cdot))$.
\end{corollary}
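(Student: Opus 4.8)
The plan is to derive uniqueness in law from the pathwise uniqueness of Theorem~\ref{t_unistr} by the classical Yamada--Watanabe gluing argument (see \cite{KaSh}), carried out in the present infinite-dimensional setting. The only input is pathwise uniqueness: we need no separate existence statement, since the two regular weak solutions to be compared are given (weak existence is anyway furnished by the Lifting Theorem~\ref{l_lif}). I would work in the representation \eqref{e44} of the effective equation, so that the driving noise is the countable family $\{\beta_r(\cdot)\}$; its path space $\WW=C([0,T],\R^\N)$ with the product topology is Polish, as are the state space $h^p$ and the solution path space $\cH_v$. Recall that a regular weak solution comes equipped with an $\FF_0$-measurable initial value $\xi$, independent of $\{\beta_r\}$, and that the noise families of $v^1,v^2$ both have the law $\PP_W$ of a sequence of standard independent Wiener processes, while $\DD(v^1(0))=\DD(v^2(0))=:\mu_0$.

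For $i=1,2$ I would record the joint law $Q^i=\DD\big(v^i,\{\beta^i_r\},v^i(0)\big)$ on $\cH_v\times\WW\times h^p$. By independence of the noise from the initial value, the marginal of $Q^i$ on $\WW\times h^p$ is $\PP_W\otimes\mu_0$, which is the \emph{same} for $i=1,2$. Since all spaces are Polish, $Q^i$ disintegrates as $Q^i(dv,dw,d\xi)=Q^i_{w,\xi}(dv)\,\PP_W(dw)\,\mu_0(d\xi)$, where $Q^i_{w,\xi}$ is the regular conditional law of $v^i$ given $(\{\beta^i_r\},v^i(0))=(w,\xi)$. I then glue the two solutions along a shared noise and a shared initial datum: on $\cH_v\times\cH_v\times\WW\times h^p$ set
\[
R(dv^1,dv^2,dw,d\xi)=Q^1_{w,\xi}(dv^1)\,Q^2_{w,\xi}(dv^2)\,\PP_W(dw)\,\mu_0(d\xi).
\]
Writing $(\tilde v^1,\tilde v^2,\tilde w,\tilde\xi)$ for the coordinate process, the $R$-law of $(\tilde v^i,\tilde w,\tilde\xi)$ equals $Q^i$ by construction, so each $\tilde v^i$ is driven by the same realization $\tilde w$ of the noise and starts from the same $\tilde\xi$.

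The main technical step, and the point I expect to be the real obstacle, is to verify that under $R$ each pair $(\tilde v^i,\tilde w)$ is again a regular weak solution of \eqref{e44} relative to the single common filtration $\GG_\tau=\sigma\big(\tilde v^1(s),\tilde v^2(s),\tilde w(s):s\le\tau\big)$ (suitably completed): namely that $\tilde w$ remains a family of standard independent $\GG_\tau$-Wiener processes and that each $\tilde v^i$ satisfies the integrated form of \eqref{e44}. Both facts follow from the conditional-independence structure of $R$ by the standard computation of characteristic functionals of the increments, exactly as in \cite{KaSh}; the countability of $\{\beta_r\}$ together with the Polish character of $\cH_v$, $\WW$ and $h^p$ lets that argument run verbatim, the only care being the measurability and filtration bookkeeping in infinite dimensions. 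Regularity of $\tilde v^i$, i.e.\ the bounds \eqref{bound}, is inherited automatically, since the $R$-law of $\tilde v^i$ equals $\DD(v^i)$. With this in hand, $\tilde v^1$ and $\tilde v^2$ are two strong regular solutions of \eqref{lif_sy} on $(\GG_\tau,R)$ driven by the same noise with $\tilde v^1(0)=\tilde\xi=\tilde v^2(0)$ a.s.; Theorem~\ref{t_unistr} forces $\tilde v^1=\tilde v^2$ $R$-a.s. Hence the $R$-laws of $\tilde v^1$ and $\tilde v^2$ coincide, and since these equal $\DD(v^1(\cdot))$ and $\DD(v^2(\cdot))$ respectively, we conclude $\DD(v^1(\cdot))=\DD(v^2(\cdot))$, as asserted.
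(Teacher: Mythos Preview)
Your proposal is correct and follows exactly the route the paper indicates: the paper simply states ``Using the Yamada--Watanabe arguments (see, for instance, \cite{KaSh}), we conclude that uniqueness of a strong regular solution for (\ref{lif_sy}) implies uniqueness of a regular weak solution'' and gives no further detail, while you have spelled out the standard disintegration-and-gluing construction in the appropriate Polish setting. One minor remark: when you call $\tilde v^1,\tilde v^2$ ``strong'' solutions after gluing, what you actually have are two solutions on a common filtered space driven by the same noise with the same initial datum; this is precisely what the proof of Theorem~\ref{t_unistr} (It\^o's formula plus Gronwall) uses, so the invocation is legitimate.
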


\begin{corollary}
Under the assumptions of Theorem~\ref{l_lif} the law of a lifting $v(\tau)$ is defined
in a unique way.
\end{corollary}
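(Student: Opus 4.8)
The plan is to exhibit the effective equation \eqref{lif_sy} as a semilinear stochastic heat equation with a first-order nonlinearity and infinitely smoothing dispersion, and then to prove uniqueness by an energy estimate in which the parabolic gain of two derivatives absorbs the one-derivative loss of the drift. Concretely, let $L$ be the diagonal operator $\vv_j\mapsto-j^2\vv_j$; since $d\Psi(0)$ is an isometry $H^m\to h^m$ conjugating $\p_x^2$ to $L$, this $L$ is the heat generator written in Birkhoff coordinates. Writing $u=\Psi^{-1}(v)=d\Psi(0)^{-1}v+\psi(v)$ and $d\Psi(u)=d\Psi(0)+D(u)$, Proposition~\ref{amplif_1_1} together with Theorem~\ref{t_kp08} and the Agreements gives that $\psi\colon h^m\to H^{m+1}$ and $u\mapsto D(u)\in\mathcal L(H^k,h^{k+1})$ are analytic and gain one derivative. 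Expanding $P^1(v)=d\Psi(u)\,u_{xx}$ into its four summands, the first is $d\Psi(0)\,\p_x^2\,d\Psi(0)^{-1}v=Lv$, while by the derivative count the remaining three are analytic maps $h^m\to h^{m-1}$; and $P^2$, assembled from the bilinear map $d^2\Psi$ (into $h^{k+1}$) and the rapidly decaying $b_j$, maps $h^m$ into $h^r$ for all $r$. Since the angular average $\langle\cdot\rangle$ is an average over isometries of $h^m$ fixing $L$, the drift of \eqref{lif_sy} is $R=R^1+R^2=L+N$ with
$$
N\colon h^m\longrightarrow h^{m-1}\quad\text{analytic, hence Lipschitz on bounded sets,}\qquad m\ge0 .
$$
Likewise, by its construction in Section~1.5 and the decay of the $b_j$, the dispersion $\RRR(v)=\Op(\RR(v))$ is analytic in $v$ and Hilbert--Schmidt from $L_2(\N\times\T^\infty)$ into every $h^m$, with locally Lipschitz Hilbert--Schmidt norm. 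Thus \eqref{lif_sy} is the quasilinear stochastic heat equation $d\vv_k=(Lv)_k\,d\tau+N_k(v)\,d\tau+\sum_r\lan\lan B\ran\ran_{kr}(v)\,d\beta_r$ of Theorem~B.

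With this representation the uniqueness estimate is routine. Let $v^1,v^2$ be strong regular solutions (driven by the same $\{\beta_r\}$) with $v^1(0)=v^2(0)$ a.s., put $w=v^1-v^2$, and for $R>0$ let $\tau_R=\inf\{\tau:\,|v^1(\tau)|_p\vee|v^2(\tau)|_p\ge R\}\wedge T$, so that $\tau_R\uparrow T$ a.s. by regularity. A regular solution takes values in every $h^r$, so $Lv+N(v)\in h^p$ and $v^1,v^2$ are $h^p$-valued semimartingales; hence It\^o's formula applies to $|w|_p^2$. On $[0,\tau_R]$, using $|N(v^1)-N(v^2)|_{p-1}\le C_R|w|_p$ and $\|\lan\lan B\ran\ran(v^1)-\lan\lan B\ran\ran(v^2)\|_{HS(h^p)}\le C_R|w|_p$, the identity $(Lw,w)_{h^p}=-|w|_{p+1}^2$, the interpolation $|(a,w)_{h^p}|\le|a|_{p-1}\,|w|_{p+1}$, and Young's inequality to absorb $|w|_{p+1}^2$, one obtains
$$
\big(Lw+N(v^1)-N(v^2),\,w\big)_{h^p}\ \le\ -\tfrac12|w|_{p+1}^2+\tfrac12 C_R^2|w|_p^2\ \le\ \tfrac12 C_R^2|w|_p^2 ,
$$
whence $d|w|_p^2\le C_R'\,|w|_p^2\,d\tau+d\mathcal M_\tau$ on $[0,\tau\wedge\tau_R]$ with $\mathcal M$ a zero-mean martingale (finite moments by regularity). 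Gronwall's lemma gives $\E|w(\tau\wedge\tau_R)|_p^2=0$, so $v^1\equiv v^2$ on $[0,\tau_R]$ a.s.; letting $R\to\infty$ proves the theorem, and the two corollaries follow from the Yamada--Watanabe principle.

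The genuinely non-routine point is the first one: verifying that $R-L$ loses only one derivative and that the remaining principal part is precisely the heat generator $L$. This is exactly where the quasilinearity of the nonlinear Fourier transform, Proposition~\ref{amplif_1_1}, is indispensable --- without it, $\langle P^1\rangle$ would a priori behave like a true second-order operator, $Lv$ could not absorb it, and the energy estimate would break down. Everything downstream --- the It\^o formula in $h^p$, the localization by $\tau_R$, and Gronwall's inequality --- is standard for a semilinear parabolic SPDE.
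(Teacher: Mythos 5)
Your proposal is correct and follows essentially the same route as the paper: the key step is exactly the paper's Lemma~\ref{l4.1} (the effective drift equals $\hD v$ plus an analytic map $h^m\to h^{m-1}$, resting on the quasilinearity of $\Psi$ from Proposition~\ref{amplif_1_1}), followed by an It\^o/Gronwall energy estimate with a stopping-time localization and the Yamada--Watanabe principle to pass from strong to weak uniqueness. The only cosmetic difference is that you run the estimate in the $h^p$-norm while the paper uses $|w|_0^2$ with a stopping time at the $h^2$-level; both absorb the one-derivative loss with the parabolic gain in the same way.
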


Evoking Theorem~\ref{l_lif} we obtain

\begin{corollary}\label{c_uniweak}
Let $I^1(\tau)$ and $I^2(\tau)$ be weak  regular solutions of
  \eqref{aveq_i}, \eqref{IC}   as in Theorem~\ref{t_comp1} (i.e. these are
   two limiting points of the family
  of measures $\DD(I^\nu(\cdot))$). Then their laws coincide.
\end{corollary}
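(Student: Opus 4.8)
The plan is to identify the effective equation \eqref{lif_sy} --- equivalently the It\^o system \eqref{e44} --- as a \emph{quasilinear stochastic heat equation} in the Fourier coefficients $v$, and then to prove pathwise uniqueness of regular solutions by a standard localised energy estimate; the two corollaries will then follow by the Yamada--Watanabe argument and Theorem~\ref{l_lif}, as already indicated in the text.

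First I would make the parabolic structure explicit. By Theorem~\ref{t_kp08}(2) the operator $d\Psi(0)$ is the diagonal isometry $u_s\mapsto|s|^{-1/2}u_s$, and $\partial_x^2$ has symbol $-s^2$; hence the linear part at $v=0$ of the drift $v\mapsto d\Psi(\Psi^{-1}(v))\,\partial_x^2\big(\Psi^{-1}(v)\big)$ is $\vv_k\mapsto-k^2\vv_k$. Splitting $\Psi=d\Psi(0)+\big(\Psi-d\Psi(0)\big)$ and $\Psi^{-1}=d\Psi(0)^{-1}+\big(\Psi^{-1}-d\Psi(0)^{-1}\big)$ and using Proposition~\ref{amplif_1_1} (together with its immediate consequence that $\Psi^{-1}-d\Psi(0)^{-1}$ maps $h^m$ to $H^{m+1}$), one checks that
$$
R^1_k(v)+R^2_k(v)=-k^2\vv_k+N_k(v),
$$
where, for every $m$, the map $N=(N_k)$ is analytic from $h^m$ to $h^{m-1}$ (``loses at most one derivative'') and Lipschitz on bounded subsets of $h^m$. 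Here one uses that the angular averaging $\lan\cdot\ran$ fixes $\vv_k\mapsto-k^2\vv_k$ and preserves analyticity and the one-derivative gain, and that the It\^o correction $R^2$, built from $d^2\Psi$, is itself smoothing by Proposition~\ref{amplif_1_1}. Similarly, since the $b_l$ decay faster than any power of $|l|$ and $d\Psi$ obeys the polynomial bounds of Theorem~\ref{t_kp08}(4), the dispersion operator $\RRR(v)=\Op(\RR(v))$ is Hilbert--Schmidt from $L_2(\Z^+\times\T^\infty)$ into $h^m$ for every $m$, and locally Lipschitz in $v$ with values in that operator space. Thus \eqref{lif_sy} becomes the quasilinear stochastic heat equation $d\vv_k=\big(-k^2\vv_k+N_k(v)\big)\,d\tau+\big(\RRR(v)\,dW\big)_k$, $k\ge1$ (with $W$ a cylindrical Wiener process on $L_2(\Z^+\times\T^\infty)$).

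Then I would run the energy estimate. Let $v^1,v^2$ be strong regular solutions with $v^1(0)=v^2(0)$ a.s., fix any $s$ (say $s=p$), put $w=v^1-v^2$, and for $\rho>0$ set $\tau_\rho=\inf\{\tau:|v^1(\tau)|_s\vee|v^2(\tau)|_s\ge\rho\}\wedge T$. The regularity \eqref{bound} puts $w(\tau)$ in every $h^r$, so It\^o's formula for $|w(\tau)|_s^2$ is legitimate and gives
$$
\begin{aligned}
d|w|_s^2=\Big(-2|w|_{s+1}^2&+2\lan w,\,N(v^1)-N(v^2)\ran_s\\
&+\|\RRR(v^1)-\RRR(v^2)\|_{{\rm HS}}^2\Big)\,d\tau+dM_\tau,
\end{aligned}
$$
where $\|\cdot\|_{\rm HS}$ is the Hilbert--Schmidt norm of operators $L_2(\Z^+\times\T^\infty)\to h^s$ and $M$ is a local martingale. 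On $[0,\tau_\rho]$ one has $|\lan w,\,N(v^1)-N(v^2)\ran_s|\le|w|_{s+1}\,|N(v^1)-N(v^2)|_{s-1}\le C_\rho|w|_{s+1}|w|_s$, which Young's inequality absorbs into $-2|w|_{s+1}^2$, and $\|\RRR(v^1)-\RRR(v^2)\|_{\rm HS}^2\le C_\rho^2|w|_s^2$. Dropping the surviving term $-|w|_{s+1}^2\le0$, stopping at $\tau_\rho$ and taking expectations (the stopped $M$ is a genuine martingale by the bounds on $[0,\tau_\rho]$) yields $\E|w(\tau\wedge\tau_\rho)|_s^2\le C'_\rho\int_0^\tau\E|w(r\wedge\tau_\rho)|_s^2\,dr$, whence $\E|w(\tau\wedge\tau_\rho)|_s^2\equiv0$ by Gronwall and $w(0)=0$. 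Since $v^1,v^2$ are regular, $\sup_{0\le\tau\le T}(|v^1(\tau)|_s\vee|v^2(\tau)|_s)<\infty$ a.s., so $\tau_\rho\nearrow T$ a.s. as $\rho\to\infty$, and therefore $w\equiv0$ on $[0,T]$ a.s., which is the assertion of the theorem.

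The main obstacle lies entirely in the second paragraph, in the claim that the nonlinearity $N$ of the effective equation loses only \emph{one} derivative --- that is, that \eqref{lif_sy} is genuinely quasilinear rather than merely an equation whose nonlinearity loses a derivative without any parabolic gain to compensate. This is precisely the role of Proposition~\ref{amplif_1_1} (``$\Psi$ is quasilinear''), whose own proof, via the spectral theory of Hill's operator, is deferred in the paper; once it is granted, the remaining structural points --- that $\lan\cdot\ran$, the substitutions $u=\Psi^{-1}(v)$ and $u\mapsto\Pi_\theta u$, and the correction $R^2$ all preserve ``analytic, loses $\le1$ derivative, locally Lipschitz'', and that $\RRR(v)$ is Hilbert--Schmidt into every $h^m$ --- are routine consequences of Theorem~\ref{t_kp08} and the fast decay of $\{b_l\}$, and the closing Gronwall estimate is entirely standard.
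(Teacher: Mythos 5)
Your proposal follows the paper's own route for Corollary~\ref{c_uniweak}: reduce it, via the Lifting Theorem~\ref{l_lif} and Yamada--Watanabe, to pathwise uniqueness for the effective equation, and prove the latter exactly as in Theorem~\ref{t_unistr} and Lemma~\ref{l4.1} --- split the effective drift as $\hD v$ plus a nonlinearity losing one derivative (Proposition~\ref{amplif_1_1}), then close a localised Gronwall estimate in which the cross term is absorbed by the parabolic term $-|w|_{s+1}^2$. The one point you treat as ``routine'' that is not quite: to put $d\Psi_0(u)\,\Delta\big(G(\Phi_\theta v)\big)$ in $h^{m-1}$ you must apply $d\Psi_0(u)$ to an element of $H^{m-2}$, so the smoothing property of Proposition~\ref{amplif_1_1} has to be extended to lower-order spaces, which the paper obtains from the symplecticity of $\Psi$ rather than from Proposition~\ref{amplif_1_1} alone.
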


These results and  Theorem~\ref{t_comp1} jointly imply

\begin{theorem}\label{t_final}
The action vector $I^\nu(\cdot)$ converges in law in the space $\cH_I$ to a
 regular weak  solution $I^0(\cdot)$  of (\ref{aveq_i}), \eqref{IC}.
Moreover, the law of $I^0$ equals $I\circ\DD(v(\cdot))$, where $v(\tau)$ is a
unique regular weak solution of \eqref{lif_sy} such that $v(0)=V_\vt(I_0)$. Here
$\vt$ is any fixed vector from the torus $\T^\infty$.
 \end{theorem}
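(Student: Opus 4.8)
The plan is to assemble the ingredients prepared in Sections~2--4; no new analytic estimate is needed here. First I would invoke Theorem~\ref{t_comp1}: the family $\{\DD(I^\nu(\cdot))\}_{0<\nu\le1}$ is tight in the (Polish) space $\cH_I=C([0,T],h^p_{I+})$, and every subsequential limit, as $\nu\to0$, is the law of a weak solution $I^0(\tau)$ of the averaged system \eqref{aveq_i}, \eqref{IC}; the moment bounds \eqref{est} show that such a solution is \emph{regular}, i.e. all moments of $\sup_\tau|I^0(\tau)|_{h^m_I}$ are finite for every $m$. Since a tight family in a Polish space converges weakly as soon as all its subsequential limits coincide, the asserted convergence $\DD(I^\nu(\cdot))\strela\DD(I^0(\cdot))$ reduces to the statement that \emph{every} limit point of $\{\DD(I^\nu(\cdot))\}$ carries the same law. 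This is precisely Corollary~\ref{c_uniweak}, so convergence in law follows at once, together with the fact that the limit $I^0$ is a regular weak solution of \eqref{aveq_i}, \eqref{IC}.

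Next I would establish the second assertion, the representation $\DD(I^0(\cdot))=I\circ\DD(v(\cdot))$. Fix an arbitrary $\vt\in\T^\infty$ and apply the Lifting Theorem~\ref{l_lif} to $I^0$: it produces a regular weak solution $v(\tau)$ of the effective equation \eqref{lif_sy} with $v(0)=V_\vt(I_0)$ and $\DD\bigl(I(v(\cdot))\bigr)=\DD(I^0(\cdot))$. The initial conditions are consistent because $V_\vt$ is a right inverse of the map $v\mapsto I(v)$, so that $I(V_\vt(I_0))=I_0$. By Theorem~\ref{t_unistr} (and its corollary on weak solutions via the Yamada--Watanabe argument) a regular weak solution of \eqref{lif_sy} with a prescribed deterministic initial datum is unique in law; hence $v(\tau)$ above is \emph{the} unique regular weak solution with $v(0)=V_\vt(I_0)$, and $\DD(I^0(\cdot))=I\circ\DD(v(\cdot))$. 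Since the left-hand side does not involve $\vt$, the displayed identity holds for every choice of $\vt$, which is exactly what the theorem claims.

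As for the main obstacle: honestly, there is no genuine difficulty \emph{in this proof} — it is a short synthesis, and the only things to check are the standard topological lemma (tight $+$ all limit points equal $\Rightarrow$ convergence of the whole family along sequences $\nu_j\to0$) and the compatibility $I(V_\vt(I_0))=I_0$. The substantive work lives upstream: in Theorem~\ref{l_lif}, whose proof (Section~3) builds a lifting of the averaged process to a $v$-process by a delicate stopping-time and Galerkin-type approximation, and, above all, in Theorem~\ref{t_unistr}, where uniqueness for \eqref{lif_sy} is obtained by recognizing it as a quasilinear stochastic heat equation in the Fourier variables. I would simply cite these; in particular Corollary~\ref{c_uniweak} itself is proved by lifting two competing limit points to $v$-solutions sharing the same $\vt$ and invoking Theorem~\ref{t_unistr}, so the entire deduction of Theorem~\ref{t_final} rests on the pair (Lifting Theorem, effective-equation uniqueness).
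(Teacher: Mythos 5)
Your proposal is correct and follows exactly the route the paper intends: the paper derives Theorem~\ref{t_final} by combining Theorem~\ref{t_comp1} (tightness and identification of limit points as regular weak solutions of \eqref{aveq_i}, \eqref{IC}) with Corollary~\ref{c_uniweak} (uniqueness of the limit law, itself obtained from the Lifting Theorem~\ref{l_lif} and the uniqueness Theorem~\ref{t_unistr}), and your filling-in of the standard ``tight plus unique limit point implies convergence'' step and the compatibility $I(V_\vt(I_0))=I_0$ is precisely what the paper leaves implicit.
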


\begin{proof}[Proof of Theorem \ref{t_unistr}]
Denote by $(\cdot,\cdot)_{0}$ the inner product in $h^0$.
For a fixed $\kappa>0$ we introduce the stopping time $ {\nnu}$:
$$
{\nnu}=\min\{\tau\le T\,:\,|v^1(\tau)|_{h^2}\vee v^2(\tau)|_{h^2} =\kappa\}
$$
(if the set is empty we set ${\nnu}=T$). Due to (\ref{bound})
$$
{\bf P}\{{\nnu}<T\}\le c\kappa^{-1}.
$$
Denote
$$
v^j_\kappa(\tau)=v^j(\tau\wedge{\nnu}),\qquad w(\tau)=v_k^1(\tau)-v_k^2(\tau).
$$
To prove the theorem  it suffices to show that $w(\tau)=0$ a.s.,
 for each $\kappa>0$.

We have
$$
dw_k(\tau)=\chi_{\tau<{\nnu}}\Big\{[R^1_k(v^1_\kappa)-R^1_k(v^2_\kappa)]d\tau-
[R^2_k(v^1_\kappa)-R^2_k(v^2_\kappa)]d\tau
$$
$$
+\sum\limits_{l\ge1}\,\int\limits_{\mathbb T^\infty}[{\cal R}(k;l,\theta)(v^1_\kappa)-
{\cal R}(k;l,\theta)(v^2_\kappa)]\,d\bb_{l,\theta}d\theta  \Big\}
$$
Application of the  It\^o formula yields
\begin{equation*}
\begin{split}
\E\,|w(\tau)|_0^2= & \E\int\limits_0^{\tau\wedge{\nnu}}\left(w(s),
[R^1(v^1_\kappa)-R^1(v^2_\kappa)] \right)_0ds\\
+&
\E\int\limits_0^{\tau\wedge{\nnu}}\big(w(s),[R^2(v^1_\kappa)-R^2(v^2_\kappa)]
\big)_0ds\\
+&\frac{1}{2}\E\int\limits_0^{\tau\wedge{\nnu}}\sum\limits_{l\ge1}
\int\limits_{\mathbb T^\infty}|{\cal R}(\cdot,l,\theta)(v^1_\kappa)-
{\cal R}(\cdot,l,\theta)(v^2_\kappa)|_0^2\,d\theta ds\equiv\Xi_1+\Xi_2+\Xi_3.
\end{split}
\end{equation*}
We will estimate the three terms in the r.h.s.
and
start with  the term $\Xi_3$. By the definition of $\mathcal{R}(k;l,\theta)(v)$
and due to item 4 of Theorem \ref{t_kp08} we have
$$
|\mathcal{R}(\cdot,l,\theta)(v^1_\kappa(s))- \mathcal{R}(\cdot,l,\theta)
(v^2_\kappa(s))|_0^2\le C_N\kappa^{n_0}l^{-N}|w(s)|_0^2
$$
for any $N\in\mathbb Z^+$, with a suitable $n_0\in\mathbb Z^+$. Therefore,
$$
\Xi_3\le C\kappa^{n_0}\E\int\limits_0^{\tau\wedge{\nnu}}|w(s)|_0^2\,ds.
$$
For similar reasons
$\
\Xi_2\le C\kappa^{n_0}\E\int\limits_0^{\tau\wedge{\nnu}}|w(s)|_0^2\,ds.
$

Estimating the term $\Xi_1$ is more complicated since the map $v\mapsto R^1(v)$
is unbounded in every space $h^p$.
We remind that $\LL^{-1}:=d\Psi(0)$ is the diagonal operator
$$
{\cal L}^{-1}\Big(\sum_s u_sf_s\Big)=
v,\quad v_s=|s|^{-1/2}u_s\;\;\forall\,s\in\Z_0,
$$
and introduce
$\Psi_0(u)=\Psi(u)-{\cal L}^{-1}u$. 
 According to Proposition (\ref{amplif_1_1}), $\Psi_0$ defines analytic maps
$H^m\,\mapsto\,h^{m+1}$, $m\ge 0$. We denote by $G$ the inverse map $G=\Psi^{-1}$.
 Then $G(v)={\cal L}(v)+G_0(v)$, where $G_0\,:\,h^m\longrightarrow H^{m+1}$
 is analytic for any $m\ge 0$. Finally, denote $R^1(v)-\widehat\Delta v=R^0(v)$,
 where $\hD$ is the Fourier-image of the Laplacian:
 $\widehat\Delta v=v'$, where $\vv'_j=-j^2\vv_j$, $\forall\,j$.

\begin{lemma}\label{l4.1}
For any $m\ge1$ the map $R^0:h^m\to h^{m-1}$ is analytic.
\end{lemma}

So  the effective equation \eqref{lif_sy} is a quasilinear stochastic heat equation.

\begin{proof}
We have
$$
R^1(v)=\int_{\T^\infty}\Phi_{-\theta}\LL^{-1}\Delta(G\Phi_\theta v)\,d\,\theta+
\int_{\T^\infty}\Phi_{-\theta}d\Psi_0(G\Phi_\theta v)\Delta(G\Phi_\theta v)
\,d\,\theta\,.
$$
The first integrand equals
$$
\Phi_{-\theta}\LL^{-1}\Delta\LL\Phi_\theta v+\Phi_{-\theta}\LL^{-1}\Delta (G_0 \Phi_\theta v)=
\hD v  +\Phi_{-\theta}\LL^{-1}\Delta( G_0 \Phi_\theta v)\,
$$
since $\LL^{-1}\Delta\LL\Phi_\theta=\hD$ and $\hD$ commutes with
 the operators $\Phi_\theta$.

We have $d\Psi_0(u_\theta):h^m\to h^{m+1}$.
 Since the map $\Psi$ is symplectic, then
also $d\Psi_0(u_\theta):h^r\to h^{r+1}$ for $-m-2\le r\le m$ (cf. Proposition~1.4
 in \cite{K2}).  So for any $\theta$ the
second integrand defines an analytic map $h^m\to h^{m-1}$. Now the assertion follows.
\end{proof}

By this lemma with $m=1$
\begin{equation*}
\begin{split}
\Xi_1&=\E\int_0^{\tau\wedge\Theta}\left(-|w(s)|_1^2+\big(w(s),R^0(v^1_\kappa)
-R^0(v^2_\kappa)\big)_0
\right)ds\\
&
\le \E\int_0^{\tau\wedge\Theta}\left(-|w(s)|^2_1+C_\kappa |w(s)|_0|w(s)|_1\right)ds
\le \E\, C'_\kappa \int_0^{\tau\wedge\Theta}|w(s)|^2_0\,ds.
\end{split}
\end{equation*}

Combining the obtained estimates for $\Xi_1$, $\Xi_2$ and $\Xi_3$, we arrive at
the inequality
$$
\E|w(\tau)|_0^2\le C_\kappa^1\int\limits_0^\tau\E|w(s)|_0^2ds.
$$
Since $\E|w(0)|_0^2=0$, then $\E|w(\tau)|_0^2=0$ for all $\tau$.
This completes the proof of Theorem~\ref{t_unistr}.
\end{proof}

\section{Limiting  joint  distribution of action-angles} \label{s_act-angl}

For a solution $u^\nu(t)$ of \eqref{kdv}, \eqref{k0} we denote by
$I^\nu(\tau)=I(v^\nu(\tau))$ and $\vp^\nu(\tau)=\vp(v^\nu(\tau))$ its actions
and angles, written in the slow time $\tau$. Theorem~\ref{t_final} describes
limiting behaviour of $\DD I^\nu$ as $\nu\to0$. In this section we study joint distribution
of $I^\nu(\tau)$ and $\vp^\nu(\tau)$,
 mollified in $\tau$. That is,  we study  the measures
$
\mu^\nu_f=\int_0^Tf(s)\DD\Big(   I^\nu(s),\vp^\nu(s)  \Big)\,ds
$
on the space $h_I^p\times\T^\infty$, where $f\ge0$ is a continuous function
such that $\int_0^T f=1$.

\begin{theorem}\label{t_v}
As $\nu\to0$,
\begin{equation}\label{h9}
\mu^\nu_f\strela \Big(\int_0^Tf(s)\DD(I^0(s))\,ds\Big)\times d\vp.
\end{equation}
In particular,$\ \int_0^Tf(s)\DD(\vp^\nu(s))\,ds \strela d\vp$.
 \end{theorem}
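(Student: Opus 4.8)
\medskip
\noindent\textbf{Proof proposal.} The plan is to show that every weak limit point of the family $\{\mu^\nu_f\}$ coincides with the right-hand side of \eqref{h9}. First I would check that the family is tight in $h^p_I\times\T^\infty$: the factor $\T^\infty$ is compact, while tightness of the $I$-marginals $\int_0^T f(s)\DD(I^\nu(s))\,ds$ is immediate from the uniform moment bounds \eqref{xxx}. By Prokhorov's theorem it then suffices to identify the limit $\mu$ of an arbitrary convergent subsequence $\mu^{\nu_j}_f\strela\mu$, and for that it is enough to compute $\int g(I)e^{i\lan k,\vp\ran}\,d\mu$ for every $g\in C_b(h^p_I)$ and every $k\in\Z^\infty$ with finite support $J=\supp k$, since such functions determine a finite Borel measure on $h^p_I\times\T^\infty$.

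The case $k=0$ is handled by the already established convergence of actions. The projection $h^p_I\times\T^\infty\to h^p_I$ being continuous, and $\DD(I^\nu(\cdot))\strela\DD(I^0(\cdot))$ in $\cH_I$ by Theorem~\ref{t_final} (whence $\DD(I^\nu(s))\strela\DD(I^0(s))$ for each fixed $s$, the evaluation map being continuous on $\cH_I$), the $I$-marginal of $\mu$ is $\int_0^T f(s)\DD(I^0(s))\,ds$; in particular $\int g(I)\,d\mu=\int_0^T f(s)\,\E\,g(I^0(s))\,ds$. Moreover, since $f$ is bounded, \eqref{e120} gives $\mu(\{I_j\le\delta\})=\int_0^T f(s)\,\PP\{I^0_j(s)\le\delta\}\,ds\to0$ as $\delta\to0$ for every $j$, so $\mu$ assigns no mass to the coordinate ``axes'' $\aA:=\bigcup_j\{\vv_j=0\}$.

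For $k\ne0$ one must show $\int g(I)e^{i\lan k,\vp\ran}\,d\mu=0$. The function $v\mapsto e^{i\lan k,\vp(v)\ran}=\prod_{j\in J}\bigl((v_j+iv_{-j})/|\vv_j|\bigr)^{k_j}$ is bounded by $1$ but discontinuous on $\aA$, so I would first replace it by $\psi^\e_k(v):=\prod_{j\in J}\rho_\e(|\vv_j|)\bigl((v_j+iv_{-j})/|\vv_j|\bigr)^{k_j}$, where $\rho_\e$ is a smooth cut-off vanishing near $0$ and equal to $1$ on $[\e,\infty)$; then $\psi^\e_k$ is bounded and smooth on $h^p$, one has $|\psi^\e_k-e^{i\lan k,\vp\ran}|\le 2\sum_{j\in J}\chi_{\{|\vv_j|<\e\}}$, and the angular average vanishes, $\lan\psi^\e_k\ran(I)=\prod_{j\in J}\rho_\e(\sqrt{2I_j})\cdot\frac1{2\pi}\int_0^{2\pi}e^{ik_j\vt}\,d\vt=0$ (each factor is $0$ since $k_j\ne0$), so $\lan g\,\psi^\e_k\ran\equiv0$. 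For fixed $\e$, the averaging estimate for the fast angular motion of $v^\nu$ from \cite{KP08} (Proposition~5.2 there; cf. the discussion around (5.7) and (6.17)) yields $\E\bigl|\int_0^T f(s)\,g(I^\nu(s))\,\psi^\e_k(v^\nu(s))\,ds\bigr|\to0$ as $\nu\to0$, i.e. $\int g(I)\psi^\e_k\,d\mu^\nu_f\to0$. On the other hand, by the uniform-in-$\nu$ form of \eqref{e120},
\[
\sup_{0<\nu\le1}\E\int_0^T f(s)\,\bigl|\psi^\e_k(v^\nu(s))-e^{i\lan k,\vp^\nu(s)\ran}\bigr|\,ds\ \le\ 2\|f\|_\infty\sum_{j\in J}\ \sup_{0<\nu\le1}\E\int_0^T\chi_{\{I^\nu_j(s)\le\e^2/2\}}\,ds\ =:\ \rho(\e),
\]
and $\rho(\e)\to0$ as $\e\to0$. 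Passing to the subsequence, $\int g(I)\psi^\e_k\,d\mu=\lim_j\int g(I)\psi^\e_k\,d\mu^{\nu_j}_f=0$ for every $\e$, and letting $\e\to0$ while using $\mu(\aA)=0$ gives $\int g(I)e^{i\lan k,\vp\ran}\,d\mu=0$. Together with the case $k=0$ this identifies every subsequential limit as $\bigl(\int_0^T f(s)\DD(I^0(s))\,ds\bigr)\times d\vp$, which by tightness proves \eqref{h9}; the last assertion of the theorem follows by projecting \eqref{h9} onto the $\vp$-component and using $\int_0^T f=1$.

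The main obstacle is the case $k\ne0$: showing that the oscillatory time average destroys the angular dependence. The genuinely new content is the reduction of this to two inputs from \cite{KP08} --- the averaging estimate, which encodes equidistribution of the angles over the fast time scale $\tau\sim\nu$ and ultimately rests on non-degeneracy of the KdV frequency map $W$, and the uniform-in-$\nu$ bound \eqref{e120} on the fraction of time spent near $\aA$, needed precisely because $e^{i\lan k,\vp\ran}$ is only defined off $\aA$; the rest is routine bookkeeping (one should also check that the regularity class in which the \cite{KP08} averaging estimate holds accommodates the smooth cut-off $\rho_\e$, approximating it by a polynomial on the relevant bounded range if necessary).
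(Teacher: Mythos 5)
Your argument is correct and is essentially the paper's own proof: both reduce the identification of the limit to the angular averaging estimate of \cite{KP08} (relation \eqref{h11}) applied to bounded test functions of finitely many coordinates whose angular average vanishes, combined with the convergence of the actions from Theorem~\ref{t_final} and tightness. The only differences are cosmetic --- you test against characters $g(I)e^{i\lan k,\vp\ran}$ with a cut-off near $\aA$ controlled by \eqref{e120}/\eqref{h77}, while the paper tests against general bounded Lipschitz $F^0(I^m,\vp^m)$ and first mollifies $f$ through characteristic functions; your explicit treatment of the set $\{\vv_j=0\}$ makes precise a point the paper leaves to the citation of \cite{KP08}.
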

\begin{proof}
Let us first replace $f(\tau)$ with a characteristic function
$$
\bar f(\tau)=\frac1{T_2-T_1}\,\chi_{\{T_1\le\tau\le T_2\}}\,,\qquad0\le T_1<T_2\le T.
$$
Due to \eqref{xxx} the family of measures $\{\mu_{\bar f}^\nu, \nu>0\}$ is tight in
$h_I^p\times \T^\infty$. Consider any limiting measure
 $\mu_{\bar f}^{\nu_j}\strela \mu_{\bar f}$.

Let $F(I,\vp)=F^0(I^m,\vp^m)$, where $F^0$ is a bounded Lipschitz function on
$\R^m_+\times\T^m$. We claim that
\begin{equation}\label{h10}
\frac1{T_2-T_1}\,\int_{T_1}^{T_2}\E F(I^\nu(s),\vp^\nu(s))\,ds\to
\frac1{T_2-T_1}\,\int_{T_1}^{T_2}\E \lan F\ran(I^0(s))\,ds\quad {\rm as}\quad \nu\to0.
\end{equation}
Indeed, due to Theorem~\ref{t_final} we have
$$
\frac1{T_2-T_1}\,\int_{T_1}^{T_2}\E \lan F\ran(I^\nu(s))\,ds\to
\frac1{T_2-T_1}\,\int_{T_1}^{T_2}\E \lan F\ran(I^0(s))\,ds \quad {\rm as}\quad \nu\to0.
$$
So
\eqref{h10} would follow if we prove the
convergence
\begin{equation}\label{h11}
\E\left|\int_0^\tau F(I^\nu(s),\vp^\nu(s))-\lan F\ran(I^\nu(s))\right|\,ds\to0
\quad{\rm as}\quad\nu\to0,
\end{equation}
for any $\tau$. But \eqref{h11} is established in \cite{KP08} (see there (6.9) and
below) for $F^0(I^m,\vp^m)=F_k(I^m,0;\vp^m,0))$, where $F_k(I,\vp)$ is the drift in
eq.~\eqref{eq_for_i}. The arguments in \cite{KP08} are general and apply to any
bounded Lipschitz function $F^0$.

Relation \eqref{h10} implies that
$
\mu_{\bar f}=
\Big((T_2-T_1)^{-1}\,\int_{T_1}^{T_2}\DD(I^0(s))\,ds\Big)\times d\vp
$.
 So \eqref{h9} is established for characteristic functions. Accordingly, \eqref{h9}
 holds, firstly, for piece-wise constant functions $f(\tau)$ with finitely many
discontinuities and, secondly, for continuous functions.
\end{proof}

\section{Appendices}

\subsection {Whitham averaging}   The $n$-gap solutions of the KdV equation under the zero-meanvalue
periodic boundary condition have the form \eqref{k2}, where $0=I_{n+1}=I_{n+2}=\dots$.  They
depend on the initial phase $\vp\in\T^n$ and the $n$-dimensional action $I^n\in\R^n$.
 These solutions form
a subset of the bigger family of space-quasiperiodic $n$-gap solution which may be written as
$\Theta^n(Kx+W t+\vp;w)$. Here the parameter $w$ has dimension $2n+1$, $\Theta^n$ is an analytic
function  on $\T^n\times\R^{2n+1}$ and the vectors $K,W\in\R^n$ depend on $w$. See
in \cite{ZMNP, DN89, LLV93, K2}.

Denote by $X=\nu x$ and $T=\nu t$ slow space\,-  and time-variables. We want to solve either
the KdV itself, or some its $\nu$-perturbation (say, eq.~\eqref{kdv}${}_{\eta=0}$) in the space of
functions, bounded as $|x|\to\infty$ (not necessarily periodic in $x$). We are looking
for solutions with the
initial data
$$ u_0( x)=\Theta^n(Kx+\vp_0;w_0(X)),$$
where $w_0(X)\in\R^{2n+1}$ is a given vector-function. Assuming that a solution $u(t,x)$ exists,
decomposes in asymptotical  series in $\nu$ and that the leading term may be written as
\begin{equation}\label{W}
u^0(t,x)=\Theta^n(Kx+ W t+\vp_0; w(T,X)),
\end{equation}
Whitham shown that $w(T,X)$ has to satisfy a nonlinear hyperbolic
 system, known now as
the {\it Whitham equations}. In the last 40 years much attention
 was given to the Whitham
equations and Whitham averaging (i.e. to the claim that an exact
 solution $u(t,x)$ may be
written as
 $u=u^0(t,x)+o(1)$, where $u^0$ has the form \eqref{W}). Many results
 were obtained
for the Whitham  equations for KdV and for other integrable
systems,
 e.g. see \cite{ZMNP, Kri88, DN89} (we note that
in the last section of
 \cite{DN89} the authors discuss the damped equation \eqref{kdv}${}_{\eta=0}$).
In these works the Whitham equations are postulated as a first principle,
without precise statements on their connection with the original problem.
Rigorous results on this connection, i.e. results on the Whitham averaging,
 are very few, and these are examples
 rather than general
theorems since they apply to {\it some} initial data and hold in
 {\it some} domains in the
space-time $\R^2$, see in   \cite{ LLV93}.

 In the spirit of the Whitham theory our results may be casted
 in the following way. Consider
 a perturbed KdV equation
 \begin{equation}\label{pkdv}
\dot u+u_{xxx}-6uu_x=\nu f(u,u_x,u_{xx}),
\end{equation}
and take initial condition $u_0(x)$ of the form above with arbitrary $n$,
where $w_0$ is an $x$-independent  constant  such that
$u_0(x)$ is 2$\pi$-periodic with zero mean-value.
 Let us write $u_0$ as a periodic $\infty$-gap potential
$
u_0(x)=\Theta^\infty(Kx+\vp_0;I_0),
$
 where $\Theta^\infty:\T^\infty \times\R_+^\infty\to\R$ and now $K\in\Z^\infty$, $\vp_0\in\T^\infty$
(see \cite{McKT76} for a
theory of $\infty$-gap potentials). We may write a solution of \eqref{pkdv} as
$
u^\nu(t,x)=\Theta^\infty(Kx+\vp^\nu(\tau);I^\nu(\tau))$, $\tau=\nu t
$,
with unknown phases $\vp^\nu\in\T^\infty$ and actions $I^\nu\in\R_+^\infty$.  The
main task is to recover the  actions. To do this we write the effective equations for
$I(\tau)$, corresponding to \eqref{pkdv}. Namely, we rewrite  \eqref{pkdv} using the
non-linear Fourier transform $\Psi$, pass to the slow time $\tau$, delete  from the obtained
$v$-equation the KdV vector-field
$d\Psi\circ V$ and apply to the rest the averaging \eqref{eff}. We claim that for some
classes of perturbed KdV equations the vector  $I^0(\tau)=\pi_I(v(\tau))$, where $v$
 solves the  effective equations, well approximates $I^\nu(\tau)$
with small $\nu$. Our work justifies this claim for the damped-driven perturbations
\eqref{kdv} in the sense that the convergence \eqref{k5} holds.

This special case of the Whitham averaging deals with perturbations of solutions for KdV
which fast oscillate in time (since we write them using  the slow time $\tau$), while the
general case treats solutions which fast oscillate both in the slow time $T$ and slow space $X$. The
effective equations serve to find approximately the action vector $I^\nu(\tau)\in\R^\infty_+$
which represents a space-periodic solution for \eqref{pkdv} as an infinite-gap potential
$\Theta^\infty(Kx+\vp^\nu(\tau);I^\nu(\tau))$. They play a role, similar to that of the
Whitham equations, serving to find the parameter $w(T,X)\in\R^{2n+1}$, describing $n$-gap potentials
\eqref{W} which approximate (non-periodic) solutions.

\subsection{   Lemma 4.3 from \cite{KP08}}
Below we present a construction from \cite{KP08},    used essentially in
 Section~\ref{s3}.

For $\tau\ge \theta'\ge0$ consider a solution $v(\tau)=v_P^\nu(\tau)$ of equation
 \eqref{kdv_bir}${}_P$.
For any $N\in\N$  we will construct a process
$(\bar v,\tilde v^N)(\tau)\in h^p\times\R^{2N}$, $\tau\ge\theta'$,
 such that
\begin{enumerate}
\item $\DD(\bar v(\cdot))=\DD(v(\cdot))$;
\item  $ I(\tilde v^N(\tau))\equiv I^N(v(\tau))$, a.s.;
\item $\vp(\tilde v^N(\theta'))=\vp^0$, where $\vp^0$ is a given vector in $\T^N$;
\item the process $\tilde v^N(\tau)$ satisfies certain estimates  uniformly in $\nu$.
\end{enumerate}

For $\eta_1,\eta_2\in\R^2\setminus\{0\}$ we denote  by $U(\eta_1,\eta_2)$ the
 operator in
$SO(2)$ such that $U(\eta_1,\eta_2)\frac{\eta_1}{|\eta_1|}= \frac{\eta_2}{|\eta_2|}$.
If $\eta_1=0$ or $\eta_2=0$,  we set  $U(\eta_1,\eta_2)=\,$id.

 Let us
abbreviate  in eq.~\eqref{kdv_bir}${}_P$ \ $(P^1_k(v)+P^2_k(v))^P=A_k^P(v)$. Then
the equation takes the form
\begin{equation}\label{7.1}
d\tilde\vv_k= \big(\nu^{-1}d\Psi_k(u)V(u)\big)^P d\tau+
 A_k^P(v)\,d\tau +
\sum_{j\ge1} B_{kj}^P(v)\,d\bb_j(\tau),\quad
1\le k\le N.
\end{equation}
 For $1\le k\le N$ we
 introduce the functions
$$
\tilde A_k(\tilde\vv_k,v)=U(\tilde\vv_k,\vv_k) A_k^P(v),\qquad
 \tilde B_{kj}(\tilde\vv_k,v)=U(\tilde\vv_k,\vv_k) B_{kj}^P(v),
$$
and define additional stochastic system for a vector
$\tilde v^N=(\tilde\vv_1,\dots,\tilde\vv_N)\in\R^{2N}$:
\begin{equation}\label{7.2}
d\tilde\vv_k= \tilde A_k(\tilde\vv_k,v)\,d\tau +
\sum_{j\ge1}\tilde B_{kj}(\tilde\vv_k,v)\,d\bb_j(\tau),\quad
1\le k\le N.
\end{equation}
Consider the system of equations \eqref{7.1}, \eqref{7.2}, where
  $\tau\ge\theta'$, with the initial
condition
\begin{equation}\label{h1}
\tilde v^N(\theta')=V^N_{\vp^0}\big(I(v^N(\theta'))\big)
\end{equation}
and with the given
 $v(\theta')$. It has a unique strong solution, defined while
$$
|\vv_k|, \; |\tilde\vv_k|\ge c>0\quad \forall\,k\le N,
$$
for any fixed $c>0$.

Denote $[(v, \tilde v)] =\left( \min_{1\le j\le N}\12|\vv_j|^2 \right) \wedge
\left( \min_{1\le j\le N}\12|\tilde \vv_j|^2 \right)$. Fix any $\gamma\in(0,\tfrac14]$
 and define
stopping times $\tau_j^\pm\in[\theta', T]$, $\dots,\tau_j^-<\tau_j^+<\tau^-_{j+1}<\dots$,
 as at
Step~3 in Section~3.1. Namely,
\begin{itemize}
\item If $[(\vv_0,\vv_0)]\le\gamma$, then $\tau_1^-=0$. Otherwise $\tau^+_0=0$.
\item If $\tau_j^-$ is defined, then $\tau_j^+$ is the first moment after
  $\tau_j^-$  when
$[(v(\tau),\tilde v(\tau))]\ge 2\gamma$ (if this never happens, then $\tau_j^+=T$).
\item If $\tau_j^+$ is defined, then $\tau^-_{j+1}$ is the first moment
 after $\tau_j^+$ when
$[(v,\tilde v)]\le\gamma$.
\end{itemize}
Next for $0<\gamma\le\tfrac14$
 we construct a continuous process $(v(\tau), \tilde v^{\gamma N}(\tau))$,
 $\tau\ge\theta'$,
  where  $v(\tau)\equiv v_P^\nu(\tau)$,
 $\tilde v^N(\theta')$ is given (see \eqref{h1}),
 and
for $\tau>\theta'$  the process $\tilde v^{\gamma N}$ is defined as follows:

i) If $\tilde v^{\gamma N}(\tau_j^+)$ is known, then we extend $\tilde v^{\gamma N}$ to the
segment $\Delta_j:=[\tau_j^+,\tau_{j+1}^-]$ in such a way that $(v(\tau),
 \tilde v^{\gamma N}(\tau))$ satisfies \eqref{7.1}, \eqref{7.2}.

ii) If $\tilde v^{\gamma N}(\tau_j^-)$ is known, then  on the segment
   $\Lambda_j=[\tau_j^-,\tau_j^+]$
we define $\tilde v^{\gamma N}$ as
$$
\tilde v^{\gamma N}(\tau)=U(\tilde\vv_k(\tau_j^-), \vv_k(\tau_j^-))\vv_k(\tau),\quad
k\le N.
$$

By applying It\^o's formula to the functional
$J(\tau)=\left(I_k(v(\tau))-I_k( \tilde v^{\gamma N}(\tau)\right)^2$ we  derive
 that if $J(\tau_j^+)=0$,
then $J(\tau)=0$ for all $\tau\in\Delta_j$ (see Lemma~7.1 in \cite{KP08}).
 Hence, the process
$\tilde v^{\gamma N}(\tau)$ is well defined for $\tau\in[\theta',T]$ and
\begin{equation}\label{h0}
I_k(v(\tau))\equiv I_k(\tilde v^{\gamma N}(\tau)), \quad k\le N.
\end{equation}
Let us abbreviate $U_k^j=(U(\tilde\vv_k(\tau_j^-),\vv_k(\tau_j^-))$.
Then on
 an interval $\Lambda_j$ the process $\tilde v^{\gamma N}$ satisfies the equation
\begin{equation}\label{h2}
d \tilde\vv_k^\gamma = U_k^j\left(\big(\nu^{-1}d\Psi_k(u)V(u)\big)^P+A_k^P(v)\right)\,d\tau
+\sum_l U_k^j\circ B_{kl}^P(v)\,d\bb_l(\tau).
\end{equation}
Letting formally $|\tilde\vv_k|/|\vv_k|=1$ if $\vv_k=0$,  we make the function
 $|\tilde\vv^\gamma_k|/|\vv_k|\equiv1$ along all trajectories.

Due to \eqref{7.2} and \eqref{h2},  $ \tilde v^{\gamma N}$ is an It\^o process
\begin{equation}\label{h3}
d\tilde \vv^\gamma_k=\hat A_k(\tau)\,d\tau +\sum\hat B_{kj}(\tau)\,d\bb_j(\tau),\quad
1\le k\le N.
\end{equation}
The coefficients $\hat A_k=\hat A^\gamma_k$ and $\hat B_{kj}=\hat B^\gamma_{kj}$ a.s.
 satisfy the estimates
\begin{equation}\label{h4}
|\hat A^\gamma(\tau)|\le\nu^{-1}C,\quad C^{-1}E\le \hat B^\gamma (\hat B^\gamma )^t
\le CE
\end{equation}
for all $\tau$, where $C$ depends only on $N$ and $P$ and we regard $\hat B^\gamma$ as
an $2N\times2N$-matrix.

Let us set
$$
\aA^\gamma_k(\tau)=\tilde\vv_{k}(\theta')+\int_{\theta'}^\tau\hat A^\gamma_k(s)\,ds,\quad
\MM^\gamma_k(\tau)=\sum_j\int_{\theta'}^\tau\hat B^\gamma_{kj}\,d\beta_j(\tau)
$$
(cf. \eqref{h1}) and consider the process
$$
\xi^\gamma(\tau)=(v^\gamma(\tau), \aA^\gamma(\tau), \MM^\gamma(\tau))
\in h^p\times \R^{2N}\times\R^{2N},\qquad \tau\ge\theta'.
$$
Then $\tilde v^{\gamma N}=\aA^\gamma(\tau)+\MM^\gamma (\tau)$ and due to \eqref{h4}
the family of  laws of the processes $\xi^\gamma$ is tight in the space
$C(\theta',T;h^p)\times C(\theta',T;\R^{2N})\times C(\theta',T;\R^{2N})$.
 Consider any limiting (as
$\gamma_j\to0$) law $\DD^0$ and find any process
 $(\bar v(\tau), \aA^0(\tau),\MM^0(\tau))$, distributed as $\DD^0$. Denote
$\tilde v^N(\tau)=\aA^0(\tau)+\MM^0(\tau)$ and consider the process
$(\bar v(\tau),\tilde v^N(\tau))\in h^p\times \R^{2N}$. It is easy to see that it
satisfies 1-3.  In
\cite{KP08} we show that  estimates \eqref{h4} imply that
$$
 \aA^0(\tau)= \int_{\theta'}^\tau B^N(s)\,ds,\quad
 \MM^0(\tau)=\int_{\theta'}^\tau a^N(s)\,dw(s),
$$
where $w(s)\in\R^{2N}$ is a standard Wiener process, while $B^N$ and $a^N$ meet \eqref{f2}.
That is,  $\tilde v^N(\tau)$ is an It\^o process
\begin{equation}\label{h7}
d \tilde v^N(\tau)=B^N(\tau)\,d\tau + a^N(\tau)\,dw(\tau),
\end{equation}
 where
\begin{equation}\label{h5}
|\hat B(\tau)|    \le C,\quad  C^{-1}E\le a^N  (a^N)^t(\tau)
\le CE\quad\forall\,\tau,\; a.s.
\end{equation}
These are the estimates, mentioned in item 4 above.

Now by \eqref{h4} and Theorem~4 from   Section~2.2 in \cite{Kry77},
 applied to the It\^o process $\tilde \vv_k$,  we have
\begin{equation}\label{h6}
\E\int^T_{\theta'} \chi_{\{I_k(v^\nu_P(\tau)) \le\delta   \}}d\tau\le C\delta,
 \quad \forall\,k\le N,
\end{equation}
where  $C=C(N,P)$.

Taking $\theta'=0$ and passing to a limit as $\nu\to0$ we see
 that the process $I_{Pk}(\tau)$
also
meets \eqref{h6}. Since $\DD(I_P(\cdot))\strela \DD(I(\cdot))$ as $P\to\infty$, then we
get estimate \eqref{e120}.

For any $\nu>0$ the processes $I_P^\nu$ and $I^\nu$ coincide on the event
$\{\sup_\tau|I^\nu(\tau)|_{h^I_p}\le P\}$. Due to \eqref{xxx} probability of this event goes to 1
 as $P\to\infty$, uniformly in $\nu$. So \eqref{h6} also implies that
\begin{equation}\label{h77}
\E\int^T_0 \chi_{ \{I_k^\nu(\tau)\le\delta\}}  \to0 \quad {\rm as} \; \delta\to0,
\end{equation}
uniformly in $\nu$.

\bibliography{meas}
\bibliographystyle{amsalpha}
\end{document}